\newtheorem{defnt}{Definition}
\newtheorem{remark}{Remark}
\newtheorem{thm}{Theorem}
\newtheorem{lemma}{Lemma}
\newtheorem{cor}{Corollary}
\title{A Control-Oriented Notion of Finite State Approximation}
\author{Danielle C.~Tarraf\footnote{D. C. Tarraf is with the Electrical \& Computer Engineering Department at the 
Johns Hopkins University, Baltimore, MD (dtarraf@jhu.edu).}}
\begin{document}

\markboth{Manuscript submitted to IEEE Transactions on Automatic Control}%
{Shell \MakeLowercase{\textit{et al.}}: }

\maketitle

\begin{abstract}

We consider the problem of approximating discrete-time plants with finite-valued sensors and actuators 
by deterministic finite memory systems for the purpose of certified-by-design controller synthesis.
Building on ideas from robust control,
we propose a control-oriented notion of finite state approximation for these systems, 
demonstrate its relevance to the control synthesis problem,
and discuss its key features.
\end{abstract}

\section{Introduction}

High fidelity models that accurately describe a dynamical system are often too complex for use in controller design. 
The problem of finding a lower complexity approximate model
has thus been extensively studied and continues to receive much deserved attention. 
A model complexity reduction approach should ideally provide both a lower complexity model and
a rigorous assessment of the quality of approximation,
allowing one to quantify the performance of a controller designed for the lower complexity model 
and implemented in the original system. 
The problem of approximating hybrid systems 
by simpler systems has received considerable attention recently \cite{JOUR:AlHeLP2000, JOUR:BBEFKP2007}:
In particular, {\it finite} state approximations of hybrid systems have been the object of 
intense study, due to the amenability of finite state models to control synthesis.
Two frameworks have been systematically explored:
`Qualitative models' and `simulation/bisimulation abstractions'.

`Qualitative models' refers to non-deterministic finite automata 
whose input/output behavior contains that of the original model. 
Control synthesis can be formulated as a supervisory control problem, 
addressed in the Ramadge-Wonham framework \cite{JOUR:RamWon1987, JOUR:RamWon1989}.
The results on qualitative models \cite{JOUR:Lunze1994},
qualitative reconstruction from quantized observations \cite{JOUR:RaiOYo1998}
and $l$-complete approximations \cite{JOUR:MooRai1999, JOUR:MoRaOY2002} fall in this category.
These approaches typically address output feedback problems.

`Simulation/bisimulation abstractions' collectively refers to a set
of related approaches inspired by bisimulation in concurrent processes.
These approaches ensure that the set of state trajectories of the original model is exactly matched by (bisimulation),
contained in (simulation),
matched to within some distance $\epsilon$ by (approximate bisimulation),
or contained to within some distance $\epsilon$ in (approximate simulation),
the set of state trajectories of the finite state abstraction \cite{JOUR:GirPap2007, JOUR:Tabuad2008, JOUR:TaAmAP2008}.
The performance objectives are typically formulated as constraints
on the state trajectories of the original hybrid system,
and controller synthesis is a two step procedure: 
A finite state supervisory controller is designed and subsequently refined 
to yield a certified hybrid controller for the original plant \cite{BOOK:Tabuad2009}.
These approaches typically address state feedback problems.
  
In our past research efforts, we proposed `$\rho/\mu$ gain' conditions to describe system properties, 
and presented a corresponding set of tools for verifying performance and robustness \cite{JOUR:TaMeDa2008}.
We also showed that for deterministic finite state machines,
we can systematically design feedback controllers to 
achieve specified $\rho/\mu$ gain conditions \cite{JOUR:TaMeDa2011}.
We demonstrated the use of these tools and a particular
approximation algorithm to synthesize finite state stabilizing controllers for 
switched homogeneous second order systems with binary sensors \cite{BOOKCHAPTER:TaMeDa2007, JOUR:TaMeDa2011}.
In this note,
we formalize a control-oriented notion of finite state approximation
for output feedback problems where the sensor information is coarse and actuation is finite valued.
This notion is compatible with the developed analysis and synthesis tools, 
thus contributing to the development of a new framework for finite state machine based
certified-by-design control.
While the proposed notion is inspired from robust control theory,
the class of problems considered here poses unique challenges
due to the lack of algebraic structure
(input/output signals take their values in arbitrary sets of symbols)
and the need to approximate both the dynamics and the performance objectives 
while appropriately quantifying the approximation error.

{\it Notation:}
$\mathbb{R}$, $\mathbb{Z}_+$ and $\mathbb{R}_+$ denote the reals,
non-negative integers and non-negative reals, respectively. 
Given a set $\mathcal{A}$, 
$\mathcal{A}^{\mathbb{Z}_+}$ denotes the set of all infinite sequences over $\mathcal{A}$ 
(indexed by $\mathbb{Z}_+$) 
and $2^{\mathcal{A}}$ denotes the power set of $\mathcal{A}$. 
Elements of $\mathcal{A}$ and $\mathcal{A}^{\mathbb{Z}_+}$ are denoted by $a$ 
and (boldface) $\mathbf{a}$, respectively. 
For $\mathbf{a} \in \mathcal{A}^{\mathbb{Z}_+}$, $a(i)$ denotes its $i^{th}$ term.
For $f: A \rightarrow B$, $C \subset B$, 
$f(A) = \{ b \in B | b=f(a) \textrm{   for some   } a \in A\}$
and $f^{-1}(C) = \{ a \in A | f(a) \in C \}$.

\section{Preliminaries}
\label{Sec:Preliminaries}

We briefly review some basic concepts:
Readers are referred to \cite{JOUR:TaMeDa2008} for a more detailed treatment.
A discrete-time signal is understood to be an infinite sequence over some prescribed set (or `alphabet').

\begin{defnt}
\label{def:system}
A discrete-time system $S$ is a set of pairs of signals, $S \subset \mathcal{U}^{\mathbb{Z}_+} \times \mathcal{Y}^{\mathbb{Z}_+}$,
where $\mathcal{U}$ and $\mathcal{Y}$ are given alphabets.
\end{defnt}

A discrete-time system is thus a process characterized by its feasible signals set. 
This view of systems can be considered an extension of the graph theoretic approach \cite{JOUR:GeoSmi1997}
to include the finite alphabet setting.
It also shares some similarities with Willems' behavioral approach \cite{JOUR:Willem2007},
although we insist on differentiating between input and output signals upfront.
In this setting, 
system properties of interest are captured by means of `integral' constraints on the feasible signals.

\begin{defnt} 
\label{def:GainStability}
Consider a system $S \subset \mathcal{U}^{\mathbb{Z}_+} \times \mathcal{Y}^{\mathbb{Z}_+}$ and let $\rho: \mathcal{U} \rightarrow \mathbb{R}$ 
and $\mu: \mathcal{Y} \rightarrow \mathbb{R}$ be given functions. $S$ is \textit{$\rho / \mu$ gain stable} if there exists a finite 
non-negative constant $\gamma$ such that 
\begin{equation}
\label{eq:gain}
\inf_{T \geq 0} \sum_{t=0}^{T} \gamma \rho (u(t)) - \mu (y(t))  > - \infty
\end{equation}
is satisfied for all $(\mathbf{u},\mathbf{y})$ in $S$.
\end{defnt}

In particular, when $\rho$ and $\mu$ are non-negative (and not identically zero), the `gain' can be defined.

\begin{defnt}
\label{def:Gain}
Consider a system $S \subset \mathcal{U}^{\mathbb{Z}_+} \times \mathcal{Y}^{\mathbb{Z}_+}$.
Assume that $S$ is $\rho / \mu$ gain stable for $\rho: \mathcal{U} \rightarrow \mathbb{R}_+$ and
$\mu: \mathcal{Y} \rightarrow \mathbb{R}_+$, and that neither function is identically zero.
The $\rho / \mu$ \textit{gain of} $S$ is the infimum of $\gamma$ such that (\ref{eq:gain}) is satisfied.
\end{defnt}
We are specifically interested in discrete-time plants with finite-valued actuators and sensors:
\begin{defnt}
\label{def:system}
A \textit{system over finite alphabets} $S$ is a discrete-time system
$S \subset (\mathcal{U} \times \mathcal{R})^{\mathbb{Z}_+} \times (\mathcal{Y} \times \mathcal{V})^{\mathbb{Z}_+}$
whose alphabets $\mathcal{U}$ and $\mathcal{Y}$ are finite.
\end{defnt}

Here, 
$\mathbf{r} \in \mathcal{R}^{\mathbb{Z}_+}$ and $\mathbf{u} \in \mathcal{U}^{\mathbb{Z}_+}$ 
represent the exogenous and control inputs to the plant, respectively,
while $\mathbf{v} \in \mathcal{V}^{\mathbb{Z}_+}$ and
$\mathbf{y} \in \mathcal{Y}^{\mathbb{Z}_+}$
represent the performance and sensor outputs of the plant, respectively.
The plant dynamics may be analog, discrete or hybrid.
Alphabets $\mathcal{R}$ and $\mathcal{V}$ may be finite, countable or infinite.
The approximate models of the plant will be drawn from a specific class of models:

\begin{defnt}
\label{def:DFM}
A deterministic finite state machine (DFM) is a discrete-time system
$S \subset \mathcal{U}^{\mathbb{Z}_+} \times \mathcal{Y}^{\mathbb{Z}_+}$
with finite alphabets $\mathcal{U}$, $\mathcal{Y}$,
whose feasible input and output signals ($\mathbf{u}$, $\mathbf{y}$) are related by
a state transition equation and an output equation:
\begin{eqnarray*}
q(t+1) & = & f (q(t), u(t)), \\
y(t) & = & g(q(t),u(t))
\end{eqnarray*}
where $t \in \mathbb{Z}_+$, $q(t) \in \mathcal{Q}$ for some finite set $\mathcal{Q}$ and functions $f: \mathcal{Q} \times \mathcal{U} \rightarrow \mathcal{Q}$ 
and $g: \mathcal{Q} \times \mathcal{U} \rightarrow \mathcal{Y}$.
\end{defnt}

Finally, we introduce the following notation for convenience:
Given a system $P \subset (\mathcal{U} \times \mathcal{R})^{\mathbb{Z}_+} \times (\mathcal{Y} \times \mathcal{V})^{\mathbb{Z}_+}$
and a choice of signals $\mathbf{u_o} \in \mathcal{U}^{\mathbb{Z}_+}$ and
$\mathbf{y_o} \in \mathcal{Y}^{\mathbb{Z}_+}$,
$P|_{\mathbf{u_o,y_o}}$ denotes the subset of feasible signals of $P$ 
whose first component is $\mathbf{u_o}$ and whose third component is $\mathbf{y_o}$.
That is
\begin{displaymath}
P |_{\mathbf{u_o},\mathbf{y_o}} = \Big\{ \Big( (\mathbf{u},\mathbf{r}),(\mathbf{y},\mathbf{v}) \Big) \in P \Big| \mathbf{u}=\mathbf{u_o} \textrm{   and   } \mathbf{y}=\mathbf{y_o} \Big\}.
\end{displaymath}
Note that $P|_{\mathbf{u_o,y_o}}$ may be an empty set for specific choices of $\mathbf{u_o}$ and $\mathbf{y_o}$.

\section{Control-Oriented Finite State Approximation}
\label{Sec:Approximation}

   \begin{figure*}[thpb]
       \centering
       \includegraphics[scale=0.5]{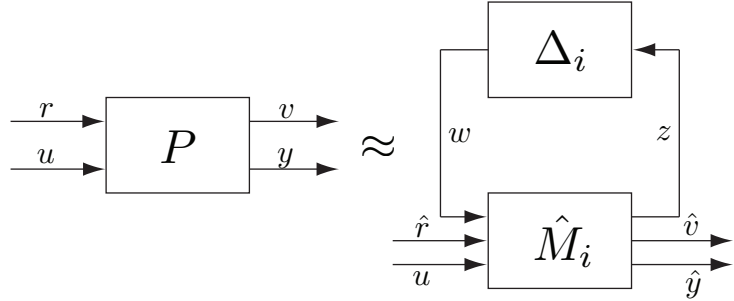}
       \caption{A finite state approximation of $P$}
       \label{Fig:Approximation}
   \end{figure*}

In this section we develop a new, 
control-oriented notion of finite state approximation for systems over finite alphabets:
We assume that the purpose of deriving a DFM approximation of a system $P$ 
over finite alphabets is to simplify the process of synthesizing a controller 
$K$ such that the closed loop system
$(P,K)$ 
is $\rho / \mu$ gain stable with $\gamma=1$ for some given $\rho$ and $\mu$.

\subsection{Proposed Notion}
\label{SSec:Approximation}

\begin{defnt}[Notion of DFM Approximation]
\label{Def:DFMApproximation}
Consider a system over finite alphabets $P \subset (\mathcal{U} \times \mathcal{R})^{\mathbb{Z}_+} \times (\mathcal{Y} \times \mathcal{V})^{\mathbb{Z}_+}$
and a desired closed loop performance objective
\begin{equation}
\label{eq:ObjectiveP}
\inf_{T \geq 0} \sum_{t=0}^{T} \rho(r(t)) - \mu(v(t)) > -\infty.
\end{equation}
for given functions
$\rho: \mathcal{R} \rightarrow \mathbb{R}$ and $\mu: \mathcal{V} \rightarrow \mathbb{R}$.
A sequence $\{\hat{M}_i\}_{i=1}^{\infty}$ of deterministic finite state machines
$\hat{M}_i \subset (\mathcal{U} \times \hat{\mathcal{R}}_i \times \mathcal{W})^{\mathbb{Z}_+} 
\times (\mathcal{Y} \times \hat{\mathcal{V}}_i \times \mathcal{Z})^{\mathbb{Z}_+}$
with $\hat{\mathcal{R}}_i \subset \mathcal{R}$ and $\hat{\mathcal{V}}_i \subset \mathcal{V}$ is a {\boldmath $\rho / \mu$} 
\textbf{approximation} of $P$
if there exists a corresponding sequence of systems $\{\Delta_i\}_{i=1}^{\infty}$,
 $\Delta_i \subset \mathcal{Z}^{\mathbb{Z}^+} \times \mathcal{W}^{\mathbb{Z}_+}$, 
and non-zero functions $\rho _{\Delta}:\mathcal{Z} \rightarrow \mathbb{R}_+$,
$\mu_{\Delta}: \mathcal{W} \rightarrow \mathbb{R}_+$, such that for every index $i$:

\begin{enumerate}[(a)]

\item There exists a surjective map $\psi_i: P \rightarrow \hat{P}_i$
satisfying
$$\psi_i \Big( P|_{\mathbf{u},\mathbf{y}} \Big) \subseteq \hat{P}_i |_{\mathbf{u},\mathbf{y}} $$
for all $(\mathbf{u},\mathbf{y}) \in \mathcal{U}^{\mathbb{Z_+}} \times \mathcal{Y}^{\mathbb{Z}_+}$,
where 
$\hat{P}_i \subset (\mathcal{U} \times \hat{\mathcal{R}}_i)^{\mathbb{Z}_+} \times (\mathcal{Y} \times \hat{\mathcal{V}}_i)^{\mathbb{Z}_+} $ 
is the feedback interconnection of $\hat{M}_i$ and $\Delta_i$ as shown in Figure \ref{Fig:Approximation}.

\item For every feasible signal
$((\mathbf{u},\mathbf{r}),(\mathbf{y},\mathbf{v})) \in P$, we have
\begin{eqnarray}
\label{eq:Objectivenounds}
\rho(r(t)) - & \mu(v(t)) \geq \rho(\hat{r}_{i+1}(t)) - \mu(\hat{v}_{i+1}(t)) 
\geq & \rho(\hat{r}_i(t)) - \mu(\hat{v}_i(t)),  
\end{eqnarray}
for all $t \in \mathbb{Z}_+$, where 
$ ((\mathbf{u},\mathbf{\hat{r}_i}),(\mathbf{\hat{y}_i},\mathbf{\hat{v}_i})) = \psi_i \Big( ( (\mathbf{u},\mathbf{r}),(\mathbf{y},\mathbf{v}))  \Big)$
and 
$((\mathbf{u},\mathbf{\hat{r}_{i+1}}),(\mathbf{\hat{y}_{i+1}},\mathbf{\hat{v}_{i+1}})) = \psi_{i+1} \Big( ((\mathbf{u},\mathbf{r}),(\mathbf{y},\mathbf{v}))  \Big)$.

\item $\Delta_i$ is $\rho_{\Delta} / \mu_{\Delta}$ gain stable,
and moreover, the corresponding $\rho_{\Delta} / \mu_{\Delta}$ gains satisfy $\gamma_{i} \geq \gamma_{i+1}$. 

\end{enumerate}  
\end{defnt}

\smallskip
\begin{remark}
Note that in this setup, the dynamics of plant $P$ as well as alphabet sets $\mathcal{U}$ and $\mathcal{Y}$
are given (in practice, defined by the system and hardware).
We also have no influence over the exogenous input $r$. 
In contrast, in addition to choosing $\hat{M}_i$ and $\Delta_i$,
we are typically free to define the performance output $v$
(which can be an arbitrary function of the state of $P$ and its inputs) 
to suit our purposes.
We are likewise free to pick functions $\rho$, $\mu$,
and non-negative functions $\rho_{\Delta}$, $\mu_{\Delta}$ 
to suit our purposes.
The proposed  notion of approximation thus provides some margin of flexibility,
and the details of the problem (both the dynamics and the desired performance) 
largely influence our choice of  signals, gain conditions, and approximate models.

\end{remark}

\subsection{Relevance to Verifably Correct Control Synthesis}
\label{SSec:Significance}

We begin by establishing several facts that will help demonstrate the relevance of the proposed notion 
of approximation to the problem of certified-by-design controller synthesis.

\begin{lemma}
\label{Lemma:EquivalenceClasses}
Consider a plant $P$ and a $\rho/\mu$ approximation $\{\hat{M}_i\}$ as in Definition \ref{Def:DFMApproximation}.
The (non-empty) sets $P|_{\mathbf{u},\mathbf{y}}$, 
$(\mathbf{u},\mathbf{y}) \in \mathcal{U}^{\mathbb{Z}_+} \times \mathcal{Y}^{\mathbb{Z}_+}$,
partition $P$ into equivalence classes.
For every index $i$, the (non-empty) sets $\hat{P}_i|_{\mathbf{u},\mathbf{y}}$, 
$(\mathbf{u},\mathbf{y}) \in \mathcal{U}^{\mathbb{Z}_+} \times \mathcal{Y}^{\mathbb{Z}_+}$,
partition $\hat{P}_i$ into equivalence classes.
\end{lemma}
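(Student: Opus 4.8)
The plan is to show that the collection of nonempty sets $\{P|_{\mathbf{u},\mathbf{y}}\}$ forms a partition of $P$ by exhibiting the associated equivalence relation explicitly, and then to apply the identical argument to each $\hat{P}_i$. For the first claim, I would define a relation $\sim$ on $P$ by declaring two feasible signals equivalent precisely when they share the same first component $\mathbf{u}$ and the same third component $\mathbf{y}$. Concretely, for $s = ((\mathbf{u},\mathbf{r}),(\mathbf{y},\mathbf{v}))$ and $s' = ((\mathbf{u}',\mathbf{r}'),(\mathbf{y}',\mathbf{v}'))$ in $P$, set $s \sim s'$ iff $\mathbf{u}=\mathbf{u}'$ and $\mathbf{y}=\mathbf{y}'$. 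Reflexivity, symmetry, and transitivity follow immediately from the corresponding properties of equality on $\mathcal{U}^{\mathbb{Z}_+}$ and $\mathcal{Y}^{\mathbb{Z}_+}$, so $\sim$ is an equivalence relation.

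**Next I would** identify the equivalence classes of $\sim$ with the nonempty sets $P|_{\mathbf{u},\mathbf{y}}$. By the definition of $P|_{\mathbf{u},\mathbf{y}}$ given in the preliminaries, the equivalence class of any $s = ((\mathbf{u},\mathbf{r}),(\mathbf{y},\mathbf{v})) \in P$ is exactly $P|_{\mathbf{u},\mathbf{y}}$, since this class consists of all feasible signals agreeing with $s$ in the first and third components. Every element of $P$ lies in its own class and hence in some nonempty $P|_{\mathbf{u},\mathbf{y}}$, giving coverage; and distinct classes of an equivalence relation are disjoint, giving the partition property. The only subtlety worth a remark is that the indexing in the lemma ranges over \emph{all} $(\mathbf{u},\mathbf{y}) \in \mathcal{U}^{\mathbb{Z}_+} \times \mathcal{Y}^{\mathbb{Z}_+}$, whereas the equivalence classes correspond only to those pairs for which $P|_{\mathbf{u},\mathbf{y}}$ is nonempty; the lemma's parenthetical ``(non-empty)'' is precisely what reconciles this, so I would state clearly that we restrict attention to the nonempty members of the family.

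**Finally I would** observe that each $\hat{P}_i$ is itself a system over the alphabets $\mathcal{U} \times \hat{\mathcal{R}}_i$ and $\mathcal{Y} \times \hat{\mathcal{V}}_i$, with its elements carrying a first component in $\mathcal{U}^{\mathbb{Z}_+}$ and a third component in $\mathcal{Y}^{\mathbb{Z}_+}$, exactly as $P$ does. The restriction notation $\hat{P}_i|_{\mathbf{u},\mathbf{y}}$ is defined in the same way. Therefore the identical equivalence-relation argument applies verbatim to $\hat{P}_i$ for each fixed index $i$, yielding the second partition claim.

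I do not expect a genuine obstacle here: the result is essentially the standard fact that any map's fibers partition its domain, applied to the projection $s \mapsto (\mathbf{u},\mathbf{y})$ onto the first and third components. The only point requiring care is bookkeeping the nonemptiness qualification, since the family is indexed over the full signal product space but only its nonempty members constitute the partition blocks.
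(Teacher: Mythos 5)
Your proposal is correct and follows essentially the same route as the paper: the paper verifies pairwise disjointness of the sets $P|_{\mathbf{u},\mathbf{y}}$ for distinct $(\mathbf{u},\mathbf{y})$ together with the fact that every feasible signal lies in some such set, which is exactly the fiber/equivalence-relation argument you spell out. Your explicit remark about restricting to the nonempty members of the family is a reasonable clarification but not a substantive difference.
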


\begin{proof}
It immediately follows from the definition that $P|_{\mathbf{u_1},\mathbf{y_1}} \cap P|_{\mathbf{u_2},\mathbf{y_2}}= \emptyset$ 
whenever $(\mathbf{u_1},\mathbf{y_1}) \neq (\mathbf{u_2},\mathbf{y_2})$.
It also follows from the definition that every $((\mathbf{u},\mathbf{r}),(\mathbf{y},\mathbf{v}))$ in $P$ belongs 
to some $P|_{\mathbf{u},\mathbf{y}}$, hence $\displaystyle \bigcup_{\mathbf{u},\mathbf{y}} P|_{\mathbf{u},\mathbf{y}} = P$.
The proof for each $\hat{P}_i$ is similar and is thus omitted for brevity.
\end{proof}

\begin{lemma}
\label{Lemma:Preimage}
Consider a plant $P$ and a $\rho/\mu$ approximation $\{\hat{M}_i\}$ as in Definition \ref{Def:DFMApproximation}.
For every index $i$, $(\mathbf{u},\mathbf{y}) \in \mathcal{U}^{\mathbb{Z}_+} \times \mathcal{Y}^{\mathbb{Z}_+}$, 
we have $\psi_i \Big( P|_{\mathbf{u},\mathbf{y}}\Big) =  \hat{P}_i|_{\mathbf{u},\mathbf{y}}$.
\end{lemma}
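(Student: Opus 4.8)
The plan is to observe that condition (a) of Definition \ref{Def:DFMApproximation} already supplies one of the two inclusions, namely $\psi_i\big(P|_{\mathbf{u},\mathbf{y}}\big) \subseteq \hat{P}_i|_{\mathbf{u},\mathbf{y}}$, so the entire content of the lemma is the reverse inclusion $\hat{P}_i|_{\mathbf{u},\mathbf{y}} \subseteq \psi_i\big(P|_{\mathbf{u},\mathbf{y}}\big)$. The two hypotheses I would lean on are the \emph{surjectivity} of $\psi_i : P \rightarrow \hat{P}_i$ (also asserted in condition (a)) and the partition structure established in Lemma \ref{Lemma:EquivalenceClasses}. Surjectivity is precisely what should upgrade the given inclusion into an equality, since it guarantees that every element of $\hat{P}_i$ has a preimage to which the label-preserving inclusion can be applied.

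For the reverse inclusion I would argue element-wise. Fix $(\mathbf{u},\mathbf{y})$ and take an arbitrary $\hat{p} \in \hat{P}_i|_{\mathbf{u},\mathbf{y}}$; if this set is empty there is nothing to prove. Because $\psi_i$ is surjective there is some $p \in P$ with $\psi_i(p) = \hat{p}$, and by Lemma \ref{Lemma:EquivalenceClasses} this $p$ lies in exactly one class, say $p \in P|_{\mathbf{u}',\mathbf{y}'}$. Applying the inclusion from condition (a) to that particular class yields $\hat{p} = \psi_i(p) \in \psi_i\big(P|_{\mathbf{u}',\mathbf{y}'}\big) \subseteq \hat{P}_i|_{\mathbf{u}',\mathbf{y}'}$. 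Thus $\hat{p}$ belongs to both $\hat{P}_i|_{\mathbf{u},\mathbf{y}}$ and $\hat{P}_i|_{\mathbf{u}',\mathbf{y}'}$, and each of these sets is non-empty since it contains $\hat{p}$.

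The one step that requires care—and the only place the argument could go astray—is identifying the index, i.e. concluding $(\mathbf{u}',\mathbf{y}') = (\mathbf{u},\mathbf{y})$. I expect this to follow immediately from the disjointness half of Lemma \ref{Lemma:EquivalenceClasses}: two classes sharing a common element must coincide, which forces the indices to agree. (Equivalently, one can read the index off directly, since membership of $\hat{p}$ in $\hat{P}_i|_{\mathbf{u},\mathbf{y}}$ simply means its first and third components are $\mathbf{u}$ and $\mathbf{y}$, pinning down the class.) Once the indices match we have $p \in P|_{\mathbf{u},\mathbf{y}}$, hence $\hat{p} = \psi_i(p) \in \psi_i\big(P|_{\mathbf{u},\mathbf{y}}\big)$, which closes the reverse inclusion. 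I would also remark that this same reasoning covers the degenerate case: if $P|_{\mathbf{u},\mathbf{y}} = \emptyset$, the index-matching shows $\hat{P}_i|_{\mathbf{u},\mathbf{y}}$ can contain no element, so both sides are empty and trivially equal. Overall I anticipate no genuine obstacle; the proof is short set-theoretic bookkeeping, with the only subtlety being the combined use of surjectivity and the label-preserving inclusion to recover the matching index.
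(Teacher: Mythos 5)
Your proof is correct and takes essentially the same route as the paper's: both rest on the surjectivity of $\psi_i$, the label-preserving inclusion from condition (a) of Definition \ref{Def:DFMApproximation}, and the disjointness of the classes established in Lemma \ref{Lemma:EquivalenceClasses}. The only difference is cosmetic --- the paper frames the reverse inclusion as a proof by contradiction (assuming an element of $\hat{P}_i|_{\mathbf{u},\mathbf{y}}$ outside the image and deriving a clash between two classes of $\hat{P}_i$), whereas you argue it directly by chasing an arbitrary element back through a preimage and matching indices.
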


\begin{proof} 
By condition (a) of Definition \ref{Def:DFMApproximation}, 
for each $i$
there exists a $\psi_i : P \rightarrow \hat{P}_i$
with $\psi_i \Big( P|_{\mathbf{u},\mathbf{y}}\Big) \subseteq \hat{P}_i |_{\mathbf{u},\mathbf{y}}$
for all $(\mathbf{u},\mathbf{y}) \in \mathcal{U}^{\mathbb{Z}_+} \times \mathcal{Y}^{\mathbb{Z}_+}$.
What remains is to show equality.
Fix index $i$.
For a given choice of $(\mathbf{u},\mathbf{y}) \in \mathcal{U}^{\mathbb{Z}_+} \times \mathcal{Y}^{\mathbb{Z}_+}$:
If $\hat{P}_i|_{\mathbf{u},\mathbf{y}} = \emptyset$, 
we have $\psi_i \Big( P|_{\mathbf{u},\mathbf{y}} \Big) \subseteq \hat{P}_i |_{\mathbf{u},\mathbf{y}} = \emptyset$,
and equality holds.
Otherwise,
assume there exists an 
$x \in \hat{P}_i |_{\mathbf{u},\mathbf{y}}$ such that 
$x \notin \psi_i \Big( P|_{\mathbf{u},\mathbf{y}}\Big)$.
Since $\psi_i$ is surjective, 
$x \in \psi_i \Big( P|_{\mathbf{u}_{1},\mathbf{y}_{1}} \Big)$ for some 
$(\mathbf{u}_{1},\mathbf{y}_{1}) \neq (\mathbf{u},\mathbf{y})$.
We then have $x \in \hat{P}_i|_{\mathbf{u},\mathbf{y}} \cap \hat{P}_i|_{\mathbf{u}_{1},\mathbf{y}_{1}}$,
leading to a contradiction by Lemma \ref{Lemma:EquivalenceClasses}.
Thus, such an $x$ cannot exist, and equality holds.
Finally, note that the proof is independent of the choice of index $i$. 
\end{proof}

\begin{cor}
\label{Cor:Emptyset}
Consider a plant $P$ and a $\rho/\mu$ approximation $\{\hat{M}_i\}$ as in Definition \ref{Def:DFMApproximation}.
For every index $i$, $(\mathbf{u},\mathbf{y}) \in \mathcal{U}^{\mathbb{Z}_+} \times \mathcal{Y}^{\mathbb{Z}_+}$, 
we have $P|_{\mathbf{u},\mathbf{y}} = \emptyset$ iff $\hat{P}_i|_{\mathbf{u},\mathbf{y}} = \emptyset$.
\end{cor}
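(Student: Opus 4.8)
The plan is to deduce this directly from Lemma \ref{Lemma:Preimage}, which already establishes the set equality $\psi_i\big(P|_{\mathbf{u},\mathbf{y}}\big) = \hat{P}_i|_{\mathbf{u},\mathbf{y}}$ for every index $i$ and every $(\mathbf{u},\mathbf{y}) \in \mathcal{U}^{\mathbb{Z}_+} \times \mathcal{Y}^{\mathbb{Z}_+}$. The corollary merely asks when both sides of this equality are empty, so the whole argument reduces to the elementary fact that a function sends the empty set to the empty set and sends any nonempty set to a nonempty set. No new structure of $P$, $\hat{P}_i$, or the approximation needs to be invoked beyond what Lemma \ref{Lemma:Preimage} already packages.

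Concretely, I would fix an index $i$ and a pair $(\mathbf{u},\mathbf{y})$ and treat the two implications separately. For the forward direction, assume $P|_{\mathbf{u},\mathbf{y}} = \emptyset$; then $\psi_i\big(P|_{\mathbf{u},\mathbf{y}}\big) = \psi_i(\emptyset) = \emptyset$, and Lemma \ref{Lemma:Preimage} immediately yields $\hat{P}_i|_{\mathbf{u},\mathbf{y}} = \emptyset$. For the converse, assume $\hat{P}_i|_{\mathbf{u},\mathbf{y}} = \emptyset$; by Lemma \ref{Lemma:Preimage} this forces $\psi_i\big(P|_{\mathbf{u},\mathbf{y}}\big) = \emptyset$. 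Since $\psi_i$ is a map defined on all of $P$, every element of $P|_{\mathbf{u},\mathbf{y}}$ has an image under $\psi_i$, so the image can be empty only if $P|_{\mathbf{u},\mathbf{y}}$ is itself empty, giving $P|_{\mathbf{u},\mathbf{y}} = \emptyset$.

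There is no genuine obstacle here: all the content has been front-loaded into Lemma \ref{Lemma:Preimage}, whose proof in turn rested on the surjectivity of $\psi_i$ and the partition property of Lemma \ref{Lemma:EquivalenceClasses}. The only point deserving a line of care is the converse implication, where one must use that $\psi_i$ is a \emph{total} function on $P$ (hence on every element of $P|_{\mathbf{u},\mathbf{y}}$) rather than a partial one, so that emptiness of the image genuinely forces emptiness of the restricted domain. I would state this explicitly, noting that it holds for the specific $\psi_i$ furnished by condition (a) of Definition \ref{Def:DFMApproximation}, so that the equivalence is established uniformly in $i$.
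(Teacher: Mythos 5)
Your proposal is correct and follows essentially the same route as the paper's own proof: both chain the equivalence $\hat{P}_i|_{\mathbf{u},\mathbf{y}} = \emptyset \Leftrightarrow \psi_i\big(P|_{\mathbf{u},\mathbf{y}}\big) = \emptyset \Leftrightarrow P|_{\mathbf{u},\mathbf{y}} = \emptyset$, with the first step supplied by Lemma \ref{Lemma:Preimage} and the second by the elementary fact that the image of a set under a total map is empty exactly when the set is. Your explicit remark about $\psi_i$ being total on $P$ is a minor elaboration the paper leaves implicit, but it is not a different argument.
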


\begin{proof}
For any index $i$, we have
$ \displaystyle \hat{P}_i | _{\mathbf{u},\mathbf{y}} = \emptyset 
\Leftrightarrow 
\psi_i \Big( P |_{\mathbf{u},\mathbf{y}}\Big) = \emptyset
\Leftrightarrow
P |_{\mathbf{u},\mathbf{y}} = \emptyset$
where the first equivalence follows from Lemma \ref{Lemma:Preimage}.
\end{proof}

As a consequence of these simple facts,
if we were to partition each of $P$ and $\hat{P}_i$ into equivalence classes of feasible signals having identical first and third components 
(corresponding to control inputs and sensor outputs),
the existence of a surjective map $\psi_i$ satisfying condition (a) of Definition \ref{Def:DFMApproximation}
effectively establishes a  1-1 correspondence between the equivalence classes of $P$ and $\hat{P}_i$.
Moreover, it  follows from condition (b) of Definition \ref{Def:DFMApproximation} that
if all signals in a given equivalence class of $\hat{P}_i$ satisfy a $\rho/\mu$ gain stability condition,
then so do all the signals of the corresponding equivalence class of $P$.
This is formalized and proved in the following statements.

\begin{cor}
\label{Cor:ClassesBijection}
Consider a plant $P$ and a $\rho/\mu$ approximation $\{\hat{M}_i\}$ as in Definition \ref{Def:DFMApproximation}.
For every index $i$, there exists a bijection between the equivalence classes 
$\{ P|_{\mathbf{u},\mathbf{y}}\}$ of $P$ and $\{ \hat{P}_i|_{\mathbf{u},\mathbf{y}} \}$ of $\hat{P}_i$.
\end{cor}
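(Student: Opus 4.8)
The plan is to construct an explicit bijection between the two collections of equivalence classes by matching them through their common index $(\mathbf{u},\mathbf{y})$. By Lemma \ref{Lemma:EquivalenceClasses}, the equivalence classes of $P$ are precisely the nonempty sets $P|_{\mathbf{u},\mathbf{y}}$, and those of $\hat{P}_i$ are precisely the nonempty sets $\hat{P}_i|_{\mathbf{u},\mathbf{y}}$. I would therefore define a candidate map $\Phi_i$ sending the class $P|_{\mathbf{u},\mathbf{y}}$ to the class $\hat{P}_i|_{\mathbf{u},\mathbf{y}}$, and then verify that $\Phi_i$ is well defined and bijective.

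First I would address well-definedness, which has two parts. Since $\Phi_i$ is specified through the label $(\mathbf{u},\mathbf{y})$, I must check that a nonempty class of $P$ determines its label uniquely: if $P|_{\mathbf{u}_1,\mathbf{y}_1} = P|_{\mathbf{u}_2,\mathbf{y}_2}$ is nonempty, then the two sets intersect, and the disjointness established in Lemma \ref{Lemma:EquivalenceClasses} forces $(\mathbf{u}_1,\mathbf{y}_1)=(\mathbf{u}_2,\mathbf{y}_2)$. Second, I must check that the image is genuinely a class of $\hat{P}_i$, i.e. that $\hat{P}_i|_{\mathbf{u},\mathbf{y}}$ is nonempty whenever $P|_{\mathbf{u},\mathbf{y}}$ is; this is exactly Corollary \ref{Cor:Emptyset}.

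Injectivity and surjectivity then follow quickly. For injectivity, if $\Phi_i$ sends two classes to the same class $\hat{P}_i|_{\mathbf{u}_1,\mathbf{y}_1} = \hat{P}_i|_{\mathbf{u}_2,\mathbf{y}_2}$ (nonempty), then applying the disjointness part of Lemma \ref{Lemma:EquivalenceClasses} to $\hat{P}_i$ gives $(\mathbf{u}_1,\mathbf{y}_1)=(\mathbf{u}_2,\mathbf{y}_2)$, so the two source classes coincide. For surjectivity, an arbitrary class of $\hat{P}_i$ is some nonempty $\hat{P}_i|_{\mathbf{u},\mathbf{y}}$; by Corollary \ref{Cor:Emptyset} the set $P|_{\mathbf{u},\mathbf{y}}$ is also nonempty, hence a class of $P$, and $\Phi_i$ maps it to $\hat{P}_i|_{\mathbf{u},\mathbf{y}}$. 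If desired, one can record via Lemma \ref{Lemma:Preimage} that this bijection is compatible with $\psi_i$, since $\psi_i$ carries $P|_{\mathbf{u},\mathbf{y}}$ onto $\hat{P}_i|_{\mathbf{u},\mathbf{y}} = \Phi_i(P|_{\mathbf{u},\mathbf{y}})$.

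The argument is entirely set-theoretic, so I do not expect a genuine obstacle; the only point requiring care is the well-definedness bookkeeping above, namely separating the claim ``distinct nonempty classes carry distinct labels'' (disjointness, Lemma \ref{Lemma:EquivalenceClasses}) from the claim ``matching labels are simultaneously empty or nonempty'' (Corollary \ref{Cor:Emptyset}). Once these are cleanly separated, bijectivity of the label-matching map $\Phi_i$ is immediate, and independence of the index $i$ is clear since the same construction applies verbatim for each $i$.
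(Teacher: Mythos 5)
Your proposal is correct and follows essentially the same route as the paper: the paper defines the map on classes by $\Psi_i(P|_{\mathbf{u},\mathbf{y}}) = \psi_i(P|_{\mathbf{u},\mathbf{y}})$, which by Lemma \ref{Lemma:Preimage} is exactly your label-matching map $\Phi_i$, and both arguments rest on the disjointness in Lemma \ref{Lemma:EquivalenceClasses} for injectivity and on Corollary \ref{Cor:Emptyset} for surjectivity and the nonemptiness bookkeeping. Your explicit separation of the two well-definedness checks is a slightly more careful presentation of the same argument, not a different proof.
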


\begin{proof}
For every index $i$, consider the map 
$\Psi_i: \{ P|_{\mathbf{u},\mathbf{y}}\} \rightarrow \{ \hat{P}_i|_{\mathbf{u},\mathbf{y}} \}$ 
defined by $\Psi_i(P|_{\mathbf{u},\mathbf{y}}) = \psi_i ( P|_{\mathbf{u},\mathbf{y}})$.
Note that the choice of codomain for $\Psi_i$ is valid by Lemma \ref{Lemma:Preimage}.
$\Psi_i$ is injective: 
\begin{eqnarray*}
\Psi_i(P|_{\mathbf{u_1},\mathbf{y_1}}) =\Psi_i(P|_{\mathbf{u_2},\mathbf{y_2}}) 
& \Rightarrow &  \hat{P}_i|_{\mathbf{u_1},\mathbf{y_1}} =  \hat{P}_i|_{\mathbf{u_2},\mathbf{y_2}}\\
& \Rightarrow & (\mathbf{u_1},\mathbf{y_1})=(\mathbf{u_2},\mathbf{y_2}) \\
& \Rightarrow & P|_{\mathbf{u_1},\mathbf{y_1}} = P|_{\mathbf{u_2},\mathbf{y_2}}
\end{eqnarray*}
with the first implication following from Lemma \ref{Lemma:Preimage} and the 
second implication following from Corollary \ref{Cor:Emptyset}.
Indeed, we can exclude the possibility that 
$\hat{P}_i|_{\mathbf{u_1},\mathbf{y_1}} =  \hat{P}_i|_{\mathbf{u_2},\mathbf{y_2}} = \emptyset$ 
in the second implication as that would imply (by Corollary \ref{Cor:Emptyset}) that
$P|_{\mathbf{u_1},\mathbf{y_1}} =  P|_{\mathbf{u_2},\mathbf{y_2}} = \emptyset$ 
which is false by assumption.
$\Psi_i$ is surjective: For every $\hat{P}_i|_{\mathbf{u},\mathbf{y}} \neq \emptyset$,
there exists $P|_{\mathbf{u},\mathbf{y}} \neq \emptyset$ (by Corollary \ref{Cor:Emptyset})
such that $\Psi_i(P|_{\mathbf{u},\mathbf{y}}) = \hat{P}_i|_{\mathbf{u},\mathbf{y}}$.
Therefore, $\Psi_i$ is bijective.
\end{proof}

\begin{lemma}
\label{Lemma:SatisfyingIC}
Consider a plant $P$ and a $\rho/\mu$ approximation $\{\hat{M}_i\}$ as in Definition \ref{Def:DFMApproximation}.
For any choice of index $i$ and of
$(\mathbf{u},\mathbf{y}) \in \mathcal{U}^{\mathbb{Z}_+} \times \mathcal{Y}^{\mathbb{Z}_+}$,
if every $((\mathbf{u},\mathbf{\hat{r}}),(\mathbf{y},\mathbf{\hat{v}})) \in \hat{P}_i|_{\mathbf{u},\mathbf{y}}$ satisfies 
\begin{equation}
\label{eq:ObjectiveS}
\inf_{T \geq 0} \sum_{t=0}^{T} \rho(\hat{r}(t)) - \mu(\hat{v}(t)) > -\infty
\end{equation}
then every $((\mathbf{u},\mathbf{r}),(\mathbf{y},\mathbf{v})) \in P|_{\mathbf{u},\mathbf{y}}$ satisfies (\ref{eq:ObjectiveP}).
\end{lemma}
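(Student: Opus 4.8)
The plan is to transfer the gain-stability estimate from the approximation $\hat{P}_i$ back to the plant $P$ along the map $\psi_i$, exploiting the fact that $\psi_i$ respects the equivalence-class structure (condition (a)) and that condition (b) of Definition \ref{Def:DFMApproximation} supplies a pointwise bound in exactly the right direction. First I would fix the index $i$ and the pair $(\mathbf{u},\mathbf{y})$, and take an arbitrary feasible signal $((\mathbf{u},\mathbf{r}),(\mathbf{y},\mathbf{v})) \in P|_{\mathbf{u},\mathbf{y}}$. Pushing it through $\psi_i$ gives an image $((\mathbf{u},\mathbf{\hat{r}_i}),(\mathbf{y},\mathbf{\hat{v}_i})) = \psi_i\big(((\mathbf{u},\mathbf{r}),(\mathbf{y},\mathbf{v}))\big)$; by condition (a) this image lies in $\hat{P}_i|_{\mathbf{u},\mathbf{y}}$ (in particular its third component equals $\mathbf{y}$), so it is one of the signals to which the hypothesis applies. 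Hence $\inf_{T \geq 0} \sum_{t=0}^{T} \rho(\hat{r}_i(t)) - \mu(\hat{v}_i(t)) > -\infty$.

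Next I would invoke condition (b), i.e.\ (\ref{eq:Objectivenounds}). By transitivity of the displayed chain (the middle term carrying index $i+1$ is not needed here) it yields the term-by-term inequality $\rho(r(t)) - \mu(v(t)) \geq \rho(\hat{r}_i(t)) - \mu(\hat{v}_i(t))$ for every $t \in \mathbb{Z}_+$. Summing this from $t=0$ to $t=T$ gives $\sum_{t=0}^{T} \rho(r(t)) - \mu(v(t)) \geq \sum_{t=0}^{T} \rho(\hat{r}_i(t)) - \mu(\hat{v}_i(t))$ for each fixed $T$.

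Finally I would pass to the infimum over $T \geq 0$. The one point deserving a word of care is that a term-by-term domination of partial sums lifts to a domination of their infima: each plant partial sum dominates the corresponding image partial sum, and the latter are all bounded below by $\inf_{T \geq 0} \sum_{t=0}^{T} \rho(\hat{r}_i(t)) - \mu(\hat{v}_i(t))$, so this common lower bound is also a lower bound for the plant partial sums. Therefore $\inf_{T \geq 0} \sum_{t=0}^{T} \rho(r(t)) - \mu(v(t)) \geq \inf_{T \geq 0} \sum_{t=0}^{T} \rho(\hat{r}_i(t)) - \mu(\hat{v}_i(t)) > -\infty$, which is precisely (\ref{eq:ObjectiveP}). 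Since the signal was arbitrary in $P|_{\mathbf{u},\mathbf{y}}$, the conclusion holds for all of them.

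I do not anticipate a genuine obstacle here: the argument is a direct monotonicity/transfer chain, and its only delicate elementary fact is that $a_T \geq b_T$ for all $T$ implies $\inf_T a_T \geq \inf_T b_T$. It is worth noting that condition (c) of Definition \ref{Def:DFMApproximation} plays no role in this lemma---only conditions (a) and (b) are used---so I would present the proof without any reference to the $\rho_\Delta / \mu_\Delta$ gains of the $\Delta_i$, reserving those for the subsequent synthesis results.
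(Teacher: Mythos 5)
Your proposal is correct and follows essentially the same route as the paper's own proof: map the plant signal forward via $\psi_i$, apply the hypothesis to the image (which lies in $\hat{P}_i|_{\mathbf{u},\mathbf{y}}$ by condition (a)), and transfer the bound back through the term-by-term inequality of condition (b) by summing and passing to the infimum. The paper additionally dispatches the case $\hat{P}_i|_{\mathbf{u},\mathbf{y}} = \emptyset$ explicitly via Corollary \ref{Cor:Emptyset}, but your formulation starting from an arbitrary element of $P|_{\mathbf{u},\mathbf{y}}$ covers that case vacuously, so nothing is missing.
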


\begin{proof}
Fix $i$ and consider any $(\mathbf{u},\mathbf{y}) \in \mathcal{U}^{\mathbb{Z}_+} \times \mathcal{Y}^{\mathbb{Z}_+}$.
If $\hat{P}_i|_{\mathbf{u,\mathbf{y}}} = \emptyset$, 
then $P|_{\mathbf{u},\mathbf{y}} = \emptyset$ by Corollary \ref{Cor:Emptyset} 
and the statement holds vacuously.
Now suppose that $\hat{P}_i|_{\mathbf{u},\mathbf{y}} \neq \emptyset$ and every 
$((\mathbf{u},\mathbf{\hat{r}}),(\mathbf{y},\mathbf{\hat{v}})) \in \hat{P}_i|_{\mathbf{u},\mathbf{y}}$ 
satisfies (\ref{eq:ObjectiveS}).
Pick any $((\mathbf{u},\mathbf{r}),(\mathbf{y},\mathbf{v})) \in P|_{\mathbf{u},\mathbf{y}}$ and consider
its image $\psi_i \Big( ((\mathbf{u},\mathbf{r}),(\mathbf{y},\mathbf{v}))  \Big) = ((\mathbf{u},\mathbf{\hat{r}}),(\mathbf{y},\mathbf{\hat{v}}))$.
By condition (b) of Definition \ref{Def:DFMApproximation}, we have
\begin{align*}
 \rho(r(t)) - \mu(v(t)) \geq \rho(\hat{r}(t)) - \mu(\hat{v}(t)), \textrm{    } \forall t
& \Rightarrow \sum_{t=0}^{T} \rho(r(t)) - \mu(v(t)) \geq \sum_{t=0}^{T} \rho(\hat{r}(t)) - \mu(\hat{v}(t)), \textrm{    } \forall T \\
& \Rightarrow \sum_{t=0}^{T} \rho(r(t)) - \mu(v(t)) \geq \inf_{T \geq 0} \sum_{t=0}^{T} \rho(\hat{r}(t)) - \mu(\hat{v}(t)), \textrm{    } \forall T \\
& \Rightarrow \inf_{T \geq 0} \sum_{t=0}^{T} \rho(r(t)) - \mu(v(t)) \geq \inf_{T \geq 0} \sum_{t=0}^{T} \rho(\hat{r}(t)) - \mu(\hat{v}(t))
\end{align*}

Thus if every element of $\hat{P}_{i}|_{\mathbf{u},\mathbf{y}}$ satisfies (\ref{eq:ObjectiveS}),
then every element of $P|_{\mathbf{u},\mathbf{y}}$ satisfies (\ref{eq:ObjectiveP}).
\end{proof}

We are now ready to turn our attention to the problem of control synthesis.

\begin{thm}
\label{Thm:Synthesis1}
Consider a plant $P$ and a $\rho/\mu$ approximation $\{\hat{M}_i\}$ as in Definition \ref{Def:DFMApproximation}.
Let $K \subset \mathcal{Y}^{\mathbb{Z}_+} \times \mathcal{U}^{\mathbb{Z}_+}$ be such that the 
feedback interconnection 
$(\hat{P}_i, K) \subset \hat{\mathcal{R}}_i^{\mathbb{Z}_+} \times \hat{\mathcal{V}}_i^{\mathbb{Z}_+}$
satisfies (\ref{eq:ObjectiveS}) for some index $i$.
Then the feedback interconnection 
$(P, K) \subset \mathcal{R}^{\mathbb{Z}_+} \times \mathcal{V}^{\mathbb{Z}_+}$
satisfies (\ref{eq:ObjectiveP}).
\end{thm}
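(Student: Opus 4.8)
The plan is to transfer the per-equivalence-class guarantee of Lemma~\ref{Lemma:SatisfyingIC} across the feedback interconnection, exploiting the fact that the controller $K$ constrains only the shared signals $(\mathbf{u},\mathbf{y})$, which are held fixed within each equivalence class $P|_{\mathbf{u},\mathbf{y}}$ and $\hat{P}_i|_{\mathbf{u},\mathbf{y}}$. First I would fix the index $i$ supplied by the hypothesis and unpack the two feedback interconnections (cf.\ Figure~\ref{Fig:Approximation}): an exogenous/performance pair $(\mathbf{r},\mathbf{v})$ lies in $(P,K)$ precisely when there exist signals $(\mathbf{u},\mathbf{y})$ with $((\mathbf{u},\mathbf{r}),(\mathbf{y},\mathbf{v})) \in P$ and $(\mathbf{y},\mathbf{u}) \in K$, and similarly a pair $(\mathbf{\hat{r}},\mathbf{\hat{v}})$ lies in $(\hat{P}_i,K)$ precisely when there exist $(\mathbf{u},\mathbf{y})$ with $((\mathbf{u},\mathbf{\hat{r}}),(\mathbf{y},\mathbf{\hat{v}})) \in \hat{P}_i$ and $(\mathbf{y},\mathbf{u}) \in K$.

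Next I would pick an arbitrary $(\mathbf{r},\mathbf{v}) \in (P,K)$ and extract a witness pair $(\mathbf{u_o},\mathbf{y_o})$ with $(\mathbf{y_o},\mathbf{u_o}) \in K$ and $((\mathbf{u_o},\mathbf{r}),(\mathbf{y_o},\mathbf{v})) \in P|_{\mathbf{u_o},\mathbf{y_o}}$; note this class is non-empty. The crucial observation is that every element of the corresponding approximate class $\hat{P}_i|_{\mathbf{u_o},\mathbf{y_o}}$ automatically feeds back through $K$: any such element has the form $((\mathbf{u_o},\mathbf{\hat{r}}),(\mathbf{y_o},\mathbf{\hat{v}}))$, and since its first and third components are exactly the $(\mathbf{u_o},\mathbf{y_o})$ already satisfying $(\mathbf{y_o},\mathbf{u_o}) \in K$, the interconnection constraint is met and $(\mathbf{\hat{r}},\mathbf{\hat{v}}) \in (\hat{P}_i,K)$. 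By hypothesis $(\hat{P}_i,K)$ satisfies (\ref{eq:ObjectiveS}), so every element of $\hat{P}_i|_{\mathbf{u_o},\mathbf{y_o}}$ satisfies (\ref{eq:ObjectiveS}). Lemma~\ref{Lemma:SatisfyingIC} then yields that every element of $P|_{\mathbf{u_o},\mathbf{y_o}}$ satisfies (\ref{eq:ObjectiveP}); in particular $((\mathbf{u_o},\mathbf{r}),(\mathbf{y_o},\mathbf{v}))$ does, i.e.\ $(\mathbf{r},\mathbf{v})$ satisfies (\ref{eq:ObjectiveP}). As $(\mathbf{r},\mathbf{v})$ was arbitrary, $(P,K)$ satisfies (\ref{eq:ObjectiveP}).

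I expect the main obstacle to be a bookkeeping one rather than a deep one: the feedback interconnections are specified only schematically through Figure~\ref{Fig:Approximation}, so the argument hinges on articulating precisely that $K$ couples only the $(\mathbf{u},\mathbf{y})$ signals and leaves $(\mathbf{r},\mathbf{v})$ and $(\mathbf{\hat{r}},\mathbf{\hat{v}})$ free. Once that is pinned down, the fact that the witness class $\hat{P}_i|_{\mathbf{u_o},\mathbf{y_o}}$ lies entirely inside $(\hat{P}_i,K)$ is immediate, and Corollary~\ref{Cor:Emptyset} guarantees that the feasible $(\mathbf{u},\mathbf{y})$ pairs coincide for $P$ and $\hat{P}_i$, so no element of $(P,K)$ fails to have a matching approximate class. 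The remaining content is exactly Lemma~\ref{Lemma:SatisfyingIC}, applied one equivalence class at a time.
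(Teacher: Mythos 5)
Your proposal is correct and follows essentially the same route as the paper: unpack the interconnections $(P,K)$ and $(\hat{P}_i,K)$ as projections of the signals compatible with $K$, observe that for each $(\mathbf{y},\mathbf{u})\in K$ the entire class $\hat{P}_i|_{\mathbf{u},\mathbf{y}}$ lies in the closed loop and hence satisfies (\ref{eq:ObjectiveS}), and then invoke Lemma~\ref{Lemma:SatisfyingIC} class by class. The only cosmetic difference is that you argue element-wise from an arbitrary $(\mathbf{r},\mathbf{v})\in(P,K)$ while the paper quantifies over all of $P|_K$ at once.
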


\begin{proof}
Let 
\begin{eqnarray*}
P|_{K} & = & \Big\{ \Big( (\mathbf{u},\mathbf{r}),(\mathbf{y},\mathbf{v}) \Big) \in P \Big| (\mathbf{y},\mathbf{u}) \in K \Big\}, \\
\hat{P}_i|_{K} & = & \Big\{ \Big( (\mathbf{u},\mathbf{\hat{r}}),(\mathbf{\hat{y}},\mathbf{\hat{v}}) \Big) \in \hat{P}_i 
\Big| (\mathbf{\hat{y}},\mathbf{u}) \in K \Big\}.
\end{eqnarray*}
Note that the closed loop systems $(P,K)$ and $(\hat{P}_i,K)$ are simply the projections of
$P|_{K}$ and $\hat{P}_i|_{K}$, respectively, along the second and fourth components:
\begin{eqnarray*}
(P,K) =  \Big\{ (\mathbf{r},\mathbf{v}) \in \mathcal{R}^{\mathbb{Z}_+} \times \mathcal{V}^{\mathbb{Z}_+} \Big| 
((\mathbf{u},\mathbf{r}),(\mathbf{y},\mathbf{v})) \in P|_{K} \textrm{  for some  }  (\mathbf{u},\mathbf{y}) \in \mathcal{U}^{\mathbb{Z}_+} \times  \mathcal{Y}^{\mathbb{Z}_+} 
\Big\},
\end{eqnarray*}
\begin{eqnarray*}
(\hat{P}_i,K) = \Big\{ (\mathbf{\hat{r}},\mathbf{\hat{v}}) \in \hat{\mathcal{R}}_i^{\mathbb{Z}_+} \times \hat{\mathcal{V}}_i^{\mathbb{Z}_+} \Big|  
((\mathbf{u},\mathbf{\hat{r}}),(\mathbf{\hat{y}},\mathbf{\hat{v}})) \in \hat{P}_i|_{K} 
\textrm{  for some  }
(\mathbf{u},\mathbf{\hat{y}}) \in \mathcal{U}^{\mathbb{Z}_+} \times  \mathcal{Y}^{\mathbb{Z}_+} \Big\}.
\end{eqnarray*}
Also note that by definition, every $(\mathbf{r},\mathbf{v})$ in $(P,K)$ satisfies (\ref{eq:ObjectiveP}) if and only if
every $((\mathbf{u},\mathbf{r}),(\mathbf{y},\mathbf{v}))$ in $P|_{K}$ satisfies (\ref{eq:ObjectiveP}).
Likewise, every $(\mathbf{\hat{r}},\mathbf{\hat{v}})$ in $(\hat{P}_i,K)$ satisfies (\ref{eq:ObjectiveS}) if
and only if
every $((\mathbf{u},\mathbf{\hat{r}}),(\mathbf{\hat{y}},\mathbf{\hat{v}}))$ in $\hat{P}_i|_{K}$ satisfies (\ref{eq:ObjectiveP}).
Now suppose that for some index $i$, $(\hat{P}_i, K)$ satisfies (\ref{eq:ObjectiveS}).
Thus for every $(\mathbf{y},\mathbf{u}) \in K$,
all the elements of $\hat{P}_i|_{\mathbf{u},\mathbf{y}}$ satisfy (\ref{eq:ObjectiveS}),
and it follows from Lemma \ref{Lemma:SatisfyingIC} that all the elements of $P|_{\mathbf{u},\mathbf{y}}$ satisfy (\ref{eq:ObjectiveP}).
Hence every element of $P|_{K}$ also satisfies (\ref{eq:ObjectiveP}), and so does $(P,K)$.
\end{proof}

Theorem \ref{Thm:Synthesis1} implies that the original problem of designing a controller $K$ for the plant $P$ to meet performance objective (\ref{eq:ObjectiveP})
can be substituted by the problem of designing a controller $K$ for some $\hat{P}_i$ to meet an auxiliary performance objective (\ref{eq:ObjectiveS}),
since any feedback controller that allows us to meet the closed loop specifications of the latter problem 
also allows us to meet the closed loop specifications of the former problem.
Of course, the problem of finding a controller $K$ such that the feedback interconnection $(\hat{P}_i, K)$ satisfies (\ref{eq:ObjectiveS}) is a difficult problem in general,
since $\Delta_i$ can be an arbitrarily complex system.
However, a simpler problem can be posed by 
utilizing the available characterization of the approximation error $\Delta_i$ in terms of $\rho_{\Delta} / \mu_{\Delta}$ 
gain stability with gain $\gamma_i$.
Similar to what is done in the classical robust control setting, the idea is to design $K$ such that the 
interconnection of $\hat{M}_i$, $K$ and {\it any} $\Delta$ in the class $\mathbf{\Delta}_i$
\begin{equation*}
\mathbf{\Delta}_i =  \{  \Delta \subset \mathcal{Z}^{\mathbb{Z}_+} \times \mathcal{W}^{\mathbb{Z}_+}| 
 \inf_{T \geq 0} \sum_{t=0}^{T} \gamma_i \rho_{\Delta}(z(t)) - \mu_{\Delta}(w(t)) > -\infty 
\mbox{  holds  } \forall (\mathbf{z},\mathbf{w}) \in \Delta \} 
\end{equation*}
satisfies the auxiliary performance objective (\ref{eq:ObjectiveS}).
This synthesis problem can be elegantly formulated using the `Small Gain Theorem'
proposed in \cite{JOUR:TaMeDa2008}.

   \begin{figure}[thpb]
      \begin{center}
      \includegraphics[scale=0.5]{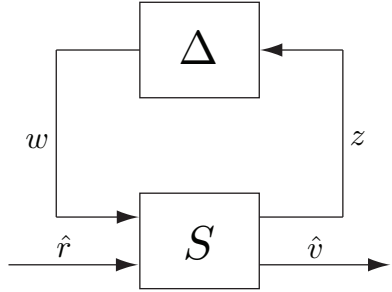}
      \caption{Setup for the `Small Gain' Theorem.}
      \label{fig:SGT}
      \end{center}
   \end{figure}

\begin{thm} [{\it Small Gain Theorem} - Adapted from \cite{JOUR:TaMeDa2008}]
\label{Thm:SGT}
Consider the feedback interconnection of two systems $S$ and $\Delta$ as in Figure \ref{fig:SGT}.
If $S$ satisfies
\begin{equation}
\label{eq:SGT1}
\inf_{T \geq 0} \sum_{t=0}^{T} \rho_{S}(\hat{r}(t),w(t)) - \mu_{S}(\hat{v}(t),z(t)) > -\infty
\end{equation}
for some $\rho_{S} : \hat{\mathcal{R}} \times \mathcal{W} \rightarrow \mathbb{R}$, 
$\mu_{S} : \hat{\mathcal{V}} \times \mathcal{Z} \rightarrow \mathbb{R}$
($\hat{\mathcal{R}}$, $\mathcal{W}$, $\hat{\mathcal{V}}$ and $\mathcal{Z}$ are finite alphabets), 
and $\Delta$ satisfies
\begin{equation} 
\label{eq:GainofDelta}
\inf_{T \geq 0} \sum_{t=0}^{T} \gamma_{\Delta} \rho_{\Delta}(z(t)) - \mu_{\Delta}(w(t)) > -\infty
\end{equation}
for some scalar $\gamma_{\Delta}$, $\rho_{\Delta}: \mathcal{Z} \rightarrow \mathbb{R}$, 
$\mu_{\Delta} : \mathcal{W} \rightarrow \mathbb{R}$,
then $(S,\Delta)$ satisfies (\ref{eq:ObjectiveS})
for $\rho: \hat{\mathcal{R}} \rightarrow \mathbb{R}$, 
$\mu: \hat{\mathcal{V}} \rightarrow \mathbb{R}$ defined by
\begin{eqnarray*}
\rho(\hat{r}) & = & \max_{w \in \mathcal{W}} \{ \rho_{S}(\hat{r},w) - \tau \mu_{\Delta}(w) \}, \\
\mu(\hat{v}) & = & \min_{z \in \mathcal{Z}} \{ \mu_{S}(\hat{v},z) - \tau \gamma_{\Delta} \rho_{\Delta}(z) \}
\end{eqnarray*}
for any $\tau>0$. \hspace{\stretch{1}} $\Box$
\end{thm}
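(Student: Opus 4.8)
The plan is to run the finite-alphabet analogue of the classical S-procedure / dissipativity-combination argument, exploiting the fact that in the feedback interconnection of Figure~\ref{fig:SGT} the internal signals $\mathbf{z}$ and $\mathbf{w}$ are shared, so that any feasible signal of $(S,\Delta)$ simultaneously satisfies both supplied inequalities. First I would make the interconnection explicit: a feasible signal $(\mathbf{\hat{r}},\mathbf{\hat{v}})$ of $(S,\Delta)$ is one for which there exist internal signals $\mathbf{z}\in\mathcal{Z}^{\mathbb{Z}_+}$ and $\mathbf{w}\in\mathcal{W}^{\mathbb{Z}_+}$ with $((\mathbf{\hat{r}},\mathbf{w}),(\mathbf{\hat{v}},\mathbf{z}))\in S$ and $(\mathbf{z},\mathbf{w})\in\Delta$, the signal $\mathbf{z}$ being the output of $S$ fed into $\Delta$ and $\mathbf{w}$ being the output of $\Delta$ fed back into $S$. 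For such a signal, (\ref{eq:SGT1}) applied to $S$ and (\ref{eq:GainofDelta}) applied to $\Delta$ both hold, so there exist finite constants $c_1,c_2$ with $\sum_{t=0}^{T}\rho_{S}(\hat{r}(t),w(t))-\mu_{S}(\hat{v}(t),z(t))\geq c_1$ and $\sum_{t=0}^{T}\gamma_{\Delta}\rho_{\Delta}(z(t))-\mu_{\Delta}(w(t))\geq c_2$ for every $T\geq 0$.

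Next I would fix any $\tau>0$ and form the $\tau$-weighted sum of the two inequalities, which is bounded below by $c_1+\tau c_2>-\infty$ uniformly in $T$ since $\tau>0$. Grouping each summand into its $\mathbf{w}$-dependent and $\mathbf{z}$-dependent parts produces, for each $t$, the quantity $[\rho_{S}(\hat{r}(t),w(t))-\tau\mu_{\Delta}(w(t))]-[\mu_{S}(\hat{v}(t),z(t))-\tau\gamma_{\Delta}\rho_{\Delta}(z(t))]$. The crux is then a pointwise comparison with the synthesized functions: because $\mathcal{W}$ and $\mathcal{Z}$ are finite, the definitions $\rho(\hat{r})=\max_{w\in\mathcal{W}}\{\rho_{S}(\hat{r},w)-\tau\mu_{\Delta}(w)\}$ and $\mu(\hat{v})=\min_{z\in\mathcal{Z}}\{\mu_{S}(\hat{v},z)-\tau\gamma_{\Delta}\rho_{\Delta}(z)\}$ are attained and finite, so for the particular realized symbols $w(t),z(t)$ they yield $\rho(\hat{r}(t))\geq\rho_{S}(\hat{r}(t),w(t))-\tau\mu_{\Delta}(w(t))$ and $\mu(\hat{v}(t))\leq\mu_{S}(\hat{v}(t),z(t))-\tau\gamma_{\Delta}\rho_{\Delta}(z(t))$.

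Combining these termwise gives $\rho(\hat{r}(t))-\mu(\hat{v}(t))\geq[\rho_{S}(\hat{r}(t),w(t))-\tau\mu_{\Delta}(w(t))]-[\mu_{S}(\hat{v}(t),z(t))-\tau\gamma_{\Delta}\rho_{\Delta}(z(t))]$ for every $t$, so summing over $t=0,\dots,T$ and invoking the uniform lower bound from the previous step yields $\sum_{t=0}^{T}\rho(\hat{r}(t))-\mu(\hat{v}(t))\geq c_1+\tau c_2$ for all $T$, whence $\inf_{T\geq 0}\sum_{t=0}^{T}\rho(\hat{r}(t))-\mu(\hat{v}(t))\geq c_1+\tau c_2>-\infty$, which is exactly (\ref{eq:ObjectiveS}). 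Since $(\mathbf{\hat{r}},\mathbf{\hat{v}})$ was an arbitrary feasible signal of $(S,\Delta)$, the interconnection satisfies (\ref{eq:ObjectiveS}).

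I expect no deep obstacle here; the difficulty is entirely in the bookkeeping. The two points demanding care are correctly identifying which signals are inputs versus outputs of $S$ and $\Delta$ across the interconnection, so that both supplied inequalities genuinely apply to the \emph{same} pair $(\mathbf{z},\mathbf{w})$, and verifying that the $\max$ and $\min$ defining $\rho$ and $\mu$ are well-defined. The latter is precisely where finiteness of $\mathcal{W}$ and $\mathcal{Z}$ enters: it guarantees the extrema exist and are finite, so that the pointwise inequalities hold with the realized symbols $w(t),z(t)$ in hand. The parameter $\tau>0$ plays no role in the finiteness conclusion itself; it merely scales the multiplier used to combine the two constraints, and one could optimize over $\tau$ afterward to sharpen the resulting $\rho/\mu$ pair.
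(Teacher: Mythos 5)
Your proposal is correct: the paper itself states Theorem~\ref{Thm:SGT} without proof (only the citation to the earlier work), and your argument --- applying both dissipation inequalities to the shared internal signals $(\mathbf{z},\mathbf{w})$ of the interconnection, forming the $\tau$-weighted sum, and bounding each summand termwise via the $\max$/$\min$ definitions of $\rho$ and $\mu$ (well-posed by finiteness of $\mathcal{W}$ and $\mathcal{Z}$) --- is precisely the standard small-gain combination the cited result rests on. The bookkeeping is right, including the per-signal (rather than uniform) nature of the constants $c_1,c_2$, which is all that Definition~\ref{def:GainStability} requires.
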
 

Interpreting Theorem \ref{Thm:SGT} where ``$S$" represents the feedback interconnection of 
$\hat{M}_i$ and $K$ and where ``$\Delta$" represents the corresponding approximation error $\Delta_i$, 
we can formulate the following:

\begin{thm}
\label{Thm:Synthesis2}
Consider a plant $P$ and a $\rho/\mu$ approximation $\{\hat{M}_i\}$ as in Definition \ref{Def:DFMApproximation}.
If for some index $i$, there exists a controller $K \subset \mathcal{Y}^{\mathbb{Z}_+} \times \mathcal{U}^{\mathbb{Z}_+}$
such that the feedback interconnection $(\hat{M}_i,K) \subset (\hat{\mathcal{R}_i} \times \mathcal{W})^{\mathbb{Z}_+} \times (\hat{\mathcal{V}_i} \times \mathcal{Z})^{\mathbb{Z}_+}$ satisfies
\begin{equation}
\label{eq:PerformanceMP}
\inf_{T \geq 0} \sum_{t=0}^{T} \rho(\hat{r}(t)) + \tau \mu_{\Delta} (w(t)) - \mu(\hat{v}(t)) - \tau \gamma_i \rho_{\Delta}(z(t)) > -\infty
\end{equation}
for some $\tau > 0$, then the feedback interconnection 
$(\hat{P}_i, K) \subset \hat{\mathcal{R}}_i ^{\mathbb{Z}_+} \times \hat{\mathcal{V}}_i^{\mathbb{Z}_+}$
satisfies (\ref{eq:ObjectiveS}).
\end{thm}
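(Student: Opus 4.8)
The plan is to invoke the Small Gain Theorem (Theorem \ref{Thm:SGT}) with a careful identification of the two subsystems and their gain functions, and then to recognize the resulting interconnection as $(\hat{P}_i, K)$. First I would set $S$ equal to the feedback interconnection $(\hat{M}_i, K)$, in which the control/sensor pair $(\mathbf{u}, \mathbf{y})$ is rendered internal by the controller $K$. What remains of $S$ is a system with external inputs $(\hat{\mathbf{r}}_i, \mathbf{w})$ and external outputs $(\hat{\mathbf{v}}_i, \mathbf{z})$, which is exactly the input/output signature demanded of ``$S$'' in Figure \ref{fig:SGT} and Theorem \ref{Thm:SGT}.

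Next I would choose the auxiliary gain functions so that the hypothesis (\ref{eq:PerformanceMP}) becomes a verbatim instance of (\ref{eq:SGT1}). Setting
\begin{eqnarray*}
\rho_S(\hat{r}, w) & = & \rho(\hat{r}) + \tau \mu_{\Delta}(w), \\
\mu_S(\hat{v}, z) & = & \mu(\hat{v}) + \tau \gamma_i \rho_{\Delta}(z),
\end{eqnarray*}
the integrand $\rho_S(\hat{r}(t), w(t)) - \mu_S(\hat{v}(t), z(t))$ coincides term by term with that of (\ref{eq:PerformanceMP}), so the assumed bound on $(\hat{M}_i, K)$ is precisely (\ref{eq:SGT1}) for $S$. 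For the second subsystem I would take $\Delta = \Delta_i$ with $\gamma_\Delta = \gamma_i$: condition (c) of Definition \ref{Def:DFMApproximation} asserts that $\Delta_i$ is $\rho_\Delta/\mu_\Delta$ gain stable with gain $\gamma_i$, which furnishes (\ref{eq:GainofDelta}).

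With both hypotheses of Theorem \ref{Thm:SGT} in hand, I would apply it and compute the functions it returns. The key observation is that the auxiliary terms cancel: $\max_{w} \{ \rho_S(\hat{r}, w) - \tau \mu_\Delta(w) \} = \rho(\hat{r})$ and $\min_{z} \{ \mu_S(\hat{v}, z) - \tau \gamma_i \rho_\Delta(z) \} = \mu(\hat{v})$, so the functions recovered by the theorem are exactly the original $\rho$ and $\mu$. Hence $(S, \Delta) = \big( (\hat{M}_i, K), \Delta_i \big)$ satisfies the auxiliary objective (\ref{eq:ObjectiveS}) for $\rho$ and $\mu$.

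The remaining step, which I expect to be the main obstacle, is to identify this interconnection with $(\hat{P}_i, K)$. By construction $\hat{P}_i$ is the interconnection $(\hat{M}_i, \Delta_i)$ formed by closing the $(\mathbf{z}, \mathbf{w})$ loop, whereas $S$ closes the $(\mathbf{u}, \mathbf{y})$ loop first; I would argue that since these two loops act on disjoint sets of internal signals, the order in which they are closed is immaterial, so both constructions yield the same feasible set of external signals $(\hat{\mathbf{r}}_i, \hat{\mathbf{v}}_i)$. This amounts to a well-posedness and bookkeeping verification that closing $K$ and then $\Delta_i$ produces the same system as forming $\hat{P}_i$ and then closing $K$. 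A secondary technical point is whether (\ref{eq:GainofDelta}) holds at the infimizing value $\gamma_\Delta = \gamma_i$ itself rather than only for $\gamma > \gamma_i$; since $\rho_\Delta$ is non-negative the defining bound is monotone in $\gamma$, and I would either invoke attainment of the infimum or, if needed, replace $\gamma_i$ by a slightly larger admissible value, observing that the conclusion is unaffected.
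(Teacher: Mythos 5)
Your proposal follows essentially the same route as the paper's proof: identify $S=(\hat{M}_i,K)$, $\Delta=\Delta_i$, $\rho_S(\hat{r},w)=\rho(\hat{r})+\tau\mu_\Delta(w)$, $\mu_S(\hat{v},z)=\mu(\hat{v})+\tau\gamma_i\rho_\Delta(z)$, $\gamma_\Delta=\gamma_i$, and invoke Theorem \ref{Thm:SGT}. The additional details you supply --- the explicit cancellation showing the recovered functions are exactly $\rho$ and $\mu$, and the observation that closing the $(\mathbf{u},\mathbf{y})$ and $(\mathbf{z},\mathbf{w})$ loops in either order yields the same external behavior --- are correct verifications of steps the paper leaves implicit.
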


\begin{proof}
Letting $S=(\hat{M}_i,K)$,
$\Delta = \Delta_i$,
$\rho_{S} (\hat{r},w) = \rho(\hat{r}) + \tau \mu_{\Delta} (w)$,
$\mu_{S} (\hat{v},z) =  \mu(\hat{v}) + \tau \gamma_i \rho_{\Delta}(z)$,
and $\gamma_\Delta = \gamma_i$,
we have by Theorem \ref{Thm:SGT} that 
the interconnection of $K$, $\hat{M}_i$ and $\Delta_i$ satisfies (\ref{eq:ObjectiveS}).
Equivalently, the feedback interconnection of $(\hat{P}_i, K)$ satisfies (\ref{eq:ObjectiveS}).
\end{proof}

The problem of designing a controller $K$ for a DFM $\hat{M}_i$ so that the closed loop system satisfies a gain condition
(such as (\ref{eq:PerformanceMP})) can be systematically addressed by solving a 
corresponding discrete minimax problem.
Interested readers are referred to \cite{JOUR:TaMeDa2011} for the details of the approach.

Intuitively, the availability of such finite approximations
allows one to successively replace the original
synthesis problem by two problems:
The first (Theorem \ref{Thm:Synthesis1})
allows one to approximate the performance objectives when the exogenous input
and performance output of the plant are not finite valued.
The second (Theorem \ref{Thm:Synthesis2})
allows one to simplify the synthesis problem at the expense of additional conservatism 
by introducing a set based description of the approximate model.
In practice, exact computation of  $\gamma_i$ may be computationally prohibitive if not impossible.
Gain {\it bounds} are typically used,
leading to a hierarchy of synthesis problems and controllers.

\begin{thm}
\label{Prop:gainBoundHierarchy}
Consider a plant $P$ and a $\rho/\mu$ approximation $\{\hat{M}_i\}$ as in Definition \ref{Def:DFMApproximation}.
For each approximate model $\hat{M}_i$ and corresponding approximation error $\Delta_i$ with gain $\gamma_i$,
let $\displaystyle \{ \tilde{\gamma}_i^j \}_{j=1}^{\infty}$ be a sequence of gain bounds satisfying 
$\tilde{\gamma}_i^j \geq \tilde{\gamma}_i^{j+1} \geq \gamma_i$.
Let $K_j \subset \mathcal{Y}^{\mathbb{Z}_+} \times \mathcal{U}^{\mathbb{Z}_+}$,
be such that the feedback interconnection 
$(\hat{M}_i,K_j) \subset (\hat{\mathcal{R}}_i \times \mathcal{W})^{\mathbb{Z}_+} \times (\hat{\mathcal{V}}_i \times \mathcal{Z})^{\mathbb{Z}_+}$ satisfies
\begin{displaymath}
\inf_{T \geq 0} \sum_{t=0}^{T} \rho(\hat{r}(t)) + \tau \mu_{\Delta} (w(t)) - 
\mu(\hat{v}(t)) - \tau \tilde{\gamma}_i^j \rho_{\Delta}(z(t)) > -\infty
\end{displaymath}
for some $\tau>0$.
Then:
\begin{enumerate}[(a)]

\item For every $k > j$,
$(\hat{M}_i,K_j) \subset (\hat{\mathcal{R}}_i \times \mathcal{W})^{\mathbb{Z}_+} \times (\hat{\mathcal{V}}_i \times \mathcal{Z})^{\mathbb{Z}_+}$ satisfies
\begin{displaymath}
\inf_{T \geq 0} \sum_{t=0}^{T} \rho(\hat{r}(t)) + \tau \mu_{\Delta} (w(t)) - 
\mu(\hat{v}(t)) - \tau \tilde{\gamma}_i^k \rho_{\Delta}(z(t)) > -\infty
\end{displaymath}

\item $(\hat{P}_i, K_j) \subset \hat{\mathcal{R}}_i ^{\mathbb{Z}_+} \times \hat{\mathcal{V}}_i^{\mathbb{Z}_+}$
satisfies (\ref{eq:ObjectiveS}).
\end{enumerate}
\end{thm}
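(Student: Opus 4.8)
The plan is to exploit two elementary facts: the monotonicity $\tilde{\gamma}_i^j \geq \tilde{\gamma}_i^{j+1} \geq \gamma_i$ of the gain-bound sequence, and the non-negativity of $\rho_{\Delta}$ guaranteed by Definition~\ref{Def:DFMApproximation}. Both parts reduce to a single pointwise observation: since $\rho_{\Delta}(z) \geq 0$ for every $z \in \mathcal{Z}$ and $\tau > 0$, decreasing the gain coefficient multiplying $\rho_{\Delta}$ can only increase the summand $\rho(\hat{r}(t)) + \tau \mu_{\Delta}(w(t)) - \mu(\hat{v}(t)) - \tau \gamma \rho_{\Delta}(z(t))$ at each time $t$.

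For part (a), I would fix $k > j$ together with an arbitrary feasible signal of $(\hat{M}_i, K_j)$. Because $\tilde{\gamma}_i^k \leq \tilde{\gamma}_i^j$ and $\tau \rho_{\Delta}(z(t)) \geq 0$, the term $-\tau \tilde{\gamma}_i^k \rho_{\Delta}(z(t))$ dominates $-\tau \tilde{\gamma}_i^j \rho_{\Delta}(z(t))$ at every $t$; hence the partial sum formed with $\tilde{\gamma}_i^k$ is at least the partial sum formed with $\tilde{\gamma}_i^j$, for every $T \geq 0$. Taking the infimum over $T$ preserves this inequality, and the right-hand side exceeds $-\infty$ by hypothesis, so the left-hand side does as well. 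As the signal was arbitrary, $(\hat{M}_i, K_j)$ satisfies the displayed bound written with $\tilde{\gamma}_i^k$.

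For part (b), I would apply the identical comparison with $\gamma_i$ substituted for $\tilde{\gamma}_i^k$, which is legitimate precisely because $\gamma_i \leq \tilde{\gamma}_i^j$. This shows that $(\hat{M}_i, K_j)$ satisfies~(\ref{eq:PerformanceMP}), the auxiliary gain condition phrased in terms of the true gain $\gamma_i$. I would then invoke Theorem~\ref{Thm:Synthesis2}, whose hypothesis is exactly~(\ref{eq:PerformanceMP}), to conclude immediately that $(\hat{P}_i, K_j)$ satisfies~(\ref{eq:ObjectiveS}).

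I do not anticipate any genuine obstacle: the whole argument rests on the sign of $\rho_{\Delta}$ and term-by-term domination, with part (b) a one-line reduction to Theorem~\ref{Thm:Synthesis2}. The only point demanding care is the direction of the inequality --- that a smaller gain coefficient yields a larger summand --- which hinges entirely on $\rho_{\Delta}$ being non-negative; were $\rho_{\Delta}$ permitted to take negative values, neither conclusion would follow.
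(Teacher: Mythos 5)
Your argument is correct and is essentially the same as the paper's, which simply cites the inequalities $\tilde{\gamma}_i^j \geq \tilde{\gamma}_i^k$ for part (a) and $\tilde{\gamma}_i^j \geq \gamma_i$ together with Theorem~\ref{Thm:Synthesis2} for part (b). Your write-up merely makes explicit the term-by-term comparison and the role of $\rho_{\Delta} \geq 0$ (guaranteed since $\rho_{\Delta}: \mathcal{Z} \rightarrow \mathbb{R}_+$ in Definition~\ref{Def:DFMApproximation}), which the paper leaves implicit.
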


\begin{proof}
The proof of  statement (a) follows from the fact that $\tilde{\gamma}_i^j \geq \tilde{\gamma}_i^k$ for $k >  j$.
The proof of statement (b) follows from $\tilde{\gamma}_i^j \geq \gamma_i$ and Theorem \ref{Thm:Synthesis2}.
\end{proof}

We conclude with a final observation: 

\begin{thm} 
Consider a plant $P$ and a $\rho/\mu$ approximation $\{\hat{M}_i\}$ as in Definition \ref{Def:DFMApproximation}.
Suppose that for some index $i^*$, there exists a time $T^*$ such that 
\begin{equation}
\label{eq:StrengthenedObjectivenounds}
\rho((r(t)) - \mu(v(t)) = \rho(\hat{r}(t)) - \mu(\hat{v}(t)), \textrm{    } \forall t \geq T^*
\end{equation}
for every $((\mathbf{u},\mathbf{r}),(\mathbf{y},\mathbf{v})) \in P$,
$((\mathbf{u},\mathbf{\hat{r}}),(\mathbf{\hat{y}},\mathbf{\hat{v}})) = \psi_{i^*} \Big( ( (\mathbf{u},\mathbf{r}),(\mathbf{y},\mathbf{v}))  \Big)$.
Then, for any $K \subset \mathcal{Y}^{\mathbb{Z}_+} \times \mathcal{U}^{\mathbb{Z}_+}$,
the interconnection 
$(\hat{P}_{i^*},K) \subset \hat{\mathcal{R}}_{i^*} \times \hat{\mathcal{V}}_{i^*}$ satisfies (\ref{eq:ObjectiveS}) iff the interconnection
$(P,K) \subset \mathcal{R}^{\mathbb{Z}_+} \times \mathcal{V}^{\mathbb{Z}_+}$ satisfies (\ref{eq:ObjectiveP}).
\end{thm}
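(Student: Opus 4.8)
The plan is to prove the two implications separately. The forward implication turns out to be already available from earlier results, so the real substance lies in the reverse implication, which is exactly where the strengthened equality hypothesis (\ref{eq:StrengthenedObjectivenounds}) is needed.

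First I would reuse the reduction from the proof of Theorem \ref{Thm:Synthesis1}: introducing $P|_K$ and $\hat{P}_{i^*}|_K$ as there, the interconnection $(P,K)$ satisfies (\ref{eq:ObjectiveP}) if and only if every element of $P|_K$ does, and $(\hat{P}_{i^*},K)$ satisfies (\ref{eq:ObjectiveS}) if and only if every element of $\hat{P}_{i^*}|_K$ does. This reduces the claim to a statement about individual feasible signals, grouped by the equivalence classes $P|_{\mathbf{u},\mathbf{y}}$ and $\hat{P}_{i^*}|_{\mathbf{u},\mathbf{y}}$ with $(\mathbf{y},\mathbf{u}) \in K$. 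The implication that $(\hat{P}_{i^*},K)$ satisfying (\ref{eq:ObjectiveS}) forces $(P,K)$ to satisfy (\ref{eq:ObjectiveP}) is then precisely Theorem \ref{Thm:Synthesis1} instantiated at $i=i^*$; it rests only on the inequality in condition (b) of Definition \ref{Def:DFMApproximation} and does not use the equality hypothesis at all.

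For the converse, I would assume $(P,K)$ satisfies (\ref{eq:ObjectiveP}) and fix an arbitrary $\hat{p} = ((\mathbf{u},\mathbf{\hat{r}}),(\mathbf{y},\mathbf{\hat{v}})) \in \hat{P}_{i^*}|_K$. By Lemma \ref{Lemma:Preimage}, $\hat{p} = \psi_{i^*}(p)$ for some $p = ((\mathbf{u},\mathbf{r}),(\mathbf{y},\mathbf{v})) \in P|_{\mathbf{u},\mathbf{y}} \subseteq P|_K$, and $p$ therefore satisfies (\ref{eq:ObjectiveP}). The key step is to invoke (\ref{eq:StrengthenedObjectivenounds}): since $\rho(r(t)) - \mu(v(t)) = \rho(\hat{r}(t)) - \mu(\hat{v}(t))$ for all $t \geq T^*$, the two partial sums differ by a fixed offset for every $T \geq T^*$, namely $\sum_{t=0}^{T} [\rho(\hat{r}(t)) - \mu(\hat{v}(t))] = \sum_{t=0}^{T} [\rho(r(t)) - \mu(v(t))] + C$, where $C = \sum_{t=0}^{T^*-1} \big( [\rho(\hat{r}(t)) - \mu(\hat{v}(t))] - [\rho(r(t)) - \mu(v(t))] \big)$ is finite and independent of $T$. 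Splitting $\inf_{T \geq 0}$ into the finite range $0 \leq T < T^*$ and the range $T \geq T^*$, the former is a minimum of finitely many reals and the latter is bounded below by $C$ plus the (finite) infimum coming from (\ref{eq:ObjectiveP}); hence $\hat{p}$ satisfies (\ref{eq:ObjectiveS}). As $\hat{p}$ was arbitrary, $(\hat{P}_{i^*},K)$ satisfies (\ref{eq:ObjectiveS}).

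The main obstacle, and really the only delicate point, is that the equality hypothesis controls only the tail $t \geq T^*$, so in the reverse direction one cannot compare the two series term by term starting from $t=0$. Isolating the finitely many leading terms as the bounded offset $C$ and then verifying that the full infimum over $T \geq 0$, not merely over $T \geq T^*$, stays finite is what makes the argument go through; since $\rho$ and $\mu$ are real-valued, this bookkeeping is routine. Conceptually, the equality upgrades the one-directional inequality of condition (b) into a two-directional relation modulo a finite constant, which is exactly the symmetry an \emph{if and only if} statement demands.
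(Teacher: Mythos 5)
Your proof is correct and follows essentially the same route as the paper: necessity via Theorem \ref{Thm:Synthesis1}, and sufficiency by pulling each element of $\hat{P}_{i^*}|_{\mathbf{u},\mathbf{y}}$ back through $\psi_{i^*}$ (Lemma \ref{Lemma:Preimage}) and using the tail equality to relate the two partial sums by a constant offset $C$. If anything, your treatment is slightly more careful than the paper's, since you explicitly dispose of the finitely many truncation times $T < T^*$ before taking the infimum, a step the paper leaves implicit.
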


\begin{proof}
Necessity follows from Theorem \ref{Thm:Synthesis1}.
To prove sufficiency,
suppose that $(P,K)$ satisfies (\ref{eq:ObjectiveP}).
Equivalently (using the notation introduced in the proof of Theorem \ref{Thm:Synthesis1}),
every $((\mathbf{u},\mathbf{r}),(\mathbf{y},\mathbf{v})) \in P|_{K}$ satisfies (\ref{eq:ObjectiveP}).
Noting that 
$$\displaystyle P|_K= \bigcup_{(\mathbf{y},\mathbf{u}) \in K} P|_{\mathbf{u,\mathbf{y}}},$$
we can equivalently rewrite this as $P|_{\mathbf{u},\mathbf{y}}$ satisfies (\ref{eq:ObjectiveP})
for all $(\mathbf{y},\mathbf{u}) \in K$.
Now pick any $(\mathbf{y},\mathbf{u}) \in K$:
For any $((\mathbf{u},\mathbf{\hat{r}}),(\mathbf{y},\mathbf{\hat{v}})) \in \hat{P}_{i^*}|_{\mathbf{u},\mathbf{y}}$,
it follows from Lemma \ref{Lemma:Preimage} that there exists a 
$((\mathbf{u},\mathbf{r}),(\mathbf{y},\mathbf{v})) \in P|_{\mathbf{u},\mathbf{y}}$
such that 
$ \psi_{i^*} \Big( ( (\mathbf{u},\mathbf{r}),(\mathbf{y},\mathbf{v}))  \Big)= ((\mathbf{u},\mathbf{\hat{r}}),(\mathbf{y},\mathbf{\hat{v}})) $.
For $T > T^*$, we can write
\begin{eqnarray*}
 \sum_{t=0}^{T} \rho(\hat{r}(t)) - \mu(\hat{v}(t))
& =& \sum_{t=0}^{T^*} \rho(\hat{r}(t)) - \mu(\hat{v}(t)) + \sum_{t=T^*}^{T} \rho(\hat{r}(t)) - \mu(\hat{v}(t)) \\
& = & \sum_{t=0}^{T^*} \rho(\hat{r}(t)) - \mu(\hat{v}(t)) + \sum_{t=T^*}^{T} \rho((r(t)) - \mu(v(t)) \\
& = & C + \sum_{t=0}^{T} \rho((r(t)) - \mu(v(t)) \\
\end{eqnarray*}
where $C= \sum_{t=0}^{T^*} \rho(\hat{r}(t)) - \mu(\hat{v}(t))  - \sum_{t=0}^{T^*} \rho(r(t)) - \mu(v(t)) $.
We thus conclude that $((\mathbf{u},\mathbf{\hat{r}}),(\mathbf{y},\mathbf{\hat{v}}))$
satisfies (\ref{eq:ObjectiveS}).
The argument is completed by noting that the choice of  $(\mathbf{y},\mathbf{u}) \in K$ and 
$((\mathbf{u},\mathbf{\hat{r}}),(\mathbf{y},\mathbf{\hat{v}})) \in \hat{P}_i|_{\mathbf{u},\mathbf{y}}$
were arbitrary.

It follows from (\ref{eq:StrengthenedObjectivenounds}) and Lemma \ref{Lemma:Preimage},
using an argument similar to that made in Theorem \ref{Thm:Synthesis1} (omitted here for brevity),
that $(\hat{P}_{i^*},K)$ satisfies (\ref{eq:ObjectiveS}).
\end{proof}

\smallskip
\begin{remark}
In practice, an iterative procedure is used, 
whereby the first component of the $\rho / \mu$ approximation sequence is constructed and 
control synthesis is attempted.
If synthesis is succesful, we are done; 
Otherwise, the next component of the sequence is constructed and 
our attempt at control synthesis is repeated.
\end{remark}

\subsection{Illustrative Example}
\label{SSec:Example}

 \begin{figure}[thpb]
 \centering
       \includegraphics[scale=0.52]{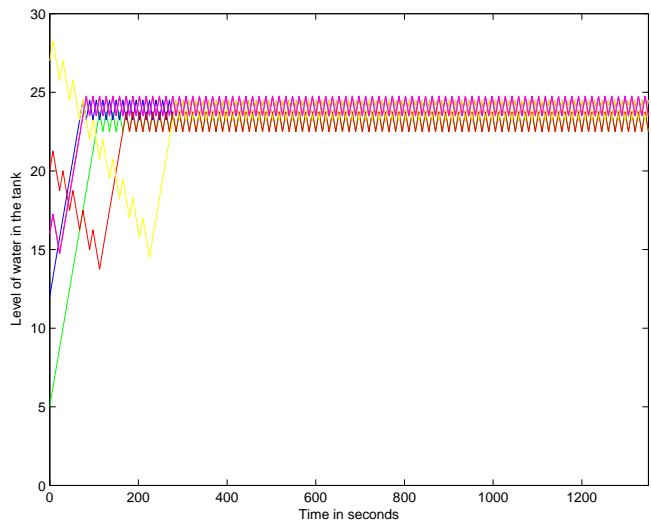}
       \caption{Water level of the tank in feedback with a DFM controller for various initial conditions.}
       \label{Fig:Example}
   \end{figure}

Consider a tank with area $A$ (sq.cm.) and height $h$ (cm), 
a binary sensor that indicates whether the water level is above or below $h/2$,
and an actuator that can pump water in or drain water out at a rate $p$ (liters/minute).
The dynamics of the sampled plant $P$, 
from which we receive a measurement $y \in \mathcal{Y}=\{\textrm{`Empty'},\textrm{`Full'}\}$
at the beginning of every sampling instant
and choose and hold a control input 
$u \in \mathcal{U}=\{\textrm{`Pump'},\textrm{`Drain'}\}$
until the next sampling instant,
is given by
\begin{displaymath}
x(t+1) = \left\{ \begin{array}{cc}
min\{h,x(t) + \frac{10^3pT}{60A} \} & \textrm{   when   } u(t)= \textrm{   `Pump'} \\
max  \{0,x(t) - \frac{10^3pT}{60A} \} & \textrm{   when    } u(t)= \textrm{   `Drain'}
\end{array} \right.
\end{displaymath}
where $T$ is the sampling interval (seconds).
Our objective is to drive and hold the water level within some desired bounds,
in the absence of exogenous input $r$.
The performance output $v$ is chosen to take the value $0$ when the water level
falls within the desired bounds and $1$ otherwise. 
The performance objective can thus be written as a gain condition (\ref{eq:ObjectiveP}),
with $\rho(r)=0$ and $\mu(v)=v$. 
Letting $A=100$, $h=30$, $p=1$, $T=7.5$, 
and choosing a desired water level between 22.5 and 25cm,
the components of the $\rho/\mu$ approximation are constructed as follows:
For $i=1$, the tank is first partitioned into $6$ equal intervals of length $h/6$,
while for each subsequent $i$ the number of elements in the partition are doubled 
(i.e. $i=2 \leftrightarrow 12$ elements, $i=3 \leftrightarrow 24$ elements,...).
The states of $\hat{M}_i$ are the elements of the partition as well as 
unions of arbitrary numbers of neighboring elements.
$\hat{M}_i$ is initialized to the state encompassing the whole tank 
(reflecting our lack of knowledge of the plant's initial state).
The transitions of  $\hat{M}_i$ are deterministic by construction, 
while its output $\hat{y}_i$ is not: 
Outputs associated with states corresponding to intervals crossing $h/2$ are 
interpreted as false predictions when computing the gain of the error system $\Delta_i$.
Error system $\Delta_i$ has input $z=u$ and output $w \in \{0,1\}$,
with $w=0$ ($w=1$) indicating a sensor output match (mismatch)
between $P$ and $\hat{M}_i$.
$\Delta_i$ is described by gain condition (\ref{eq:GainofDelta}),
where $\rho_{\Delta}(z) = \rho_{\Delta}(u)=1$ and $\mu_{\Delta}(w)=w$.
Note that the construction is similar to that proposed in \cite{JOUR:TaMeDa2011},
but with a different gain condition describing the performance objectives
as reachability specifications are considered here rather 
than exponential stability with guaranteed rate of convergence.
The performance output $\hat{v}_i$ is set to $0$ for states lying entirely within the desired bounds,
and set to $1$ otherwise.

Implementing this algorithm: For $i=1$ and $i=2$, the gain bound of $\Delta$ is 1,
and design is not successful. 
For $i=3$, the gain bound is $0$:
The approximate model thus succeeds in perfectly predicting the sensor 
output of the plant after some transient.
Moreover, control design is successful: 
Representative paths of the water level in the closed loop system, 
consisting of the plant in feedback with the controller (a DFM with 190 states)
are plotted in 
Figure \ref{Fig:Example} for various plant initial conditions.
Of course, as design is successful,
it is unecessary to construct the remaining components of the $\rho/ \mu$ approximation 
sequence for $i \geq 4$.

\section{Discussion}
\label{Sec:Comparison}

\subsection{Connections to LTI Model Reduction}

  \begin{figure*}[thpb]
       \centering
       \includegraphics[scale=0.4]{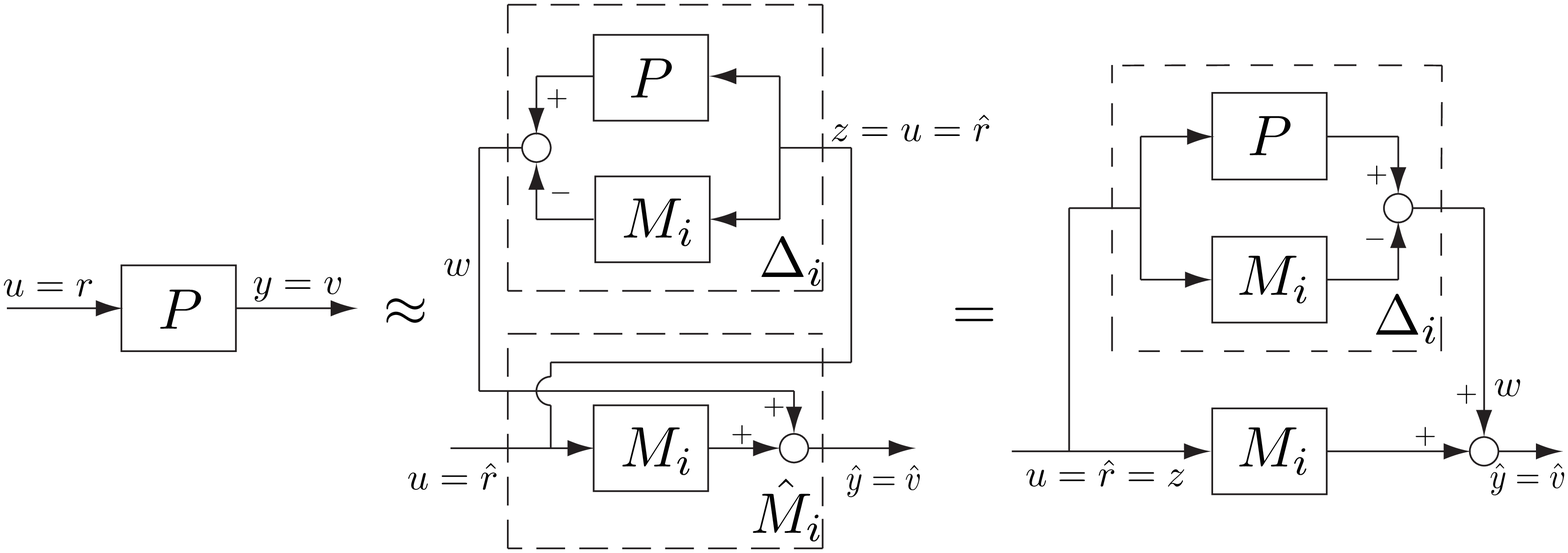}
       \caption{Definition \ref{Def:DFMApproximation} interpreted in the LTI setting.}
       \label{Fig:LTIApproximation}
   \end{figure*}

In the classical setting, 
a stable LTI plant $\tilde{P}$ of order $m$ can be considered an approximation of 
a stable LTI plant $P$ of order $n >m$ if we can recover $P$
by perturbing $\tilde{P}$ using a small stable perturbation.
The proposed notion has a similar flavor, 
with the caveat that we cannot generally hope to exactly recover the performance objective due to the 
finiteness of the input and output alphabets of a DFM.
Alternatively,
note that the notion of approximation proposed in Definition \ref{Def:DFMApproximation} 
has an interpretation in the classical setting 
(i.e. if we drop the requirements that $\hat{M}_i$ is a DFM and that $\mathcal{U}$, $\mathcal{Y}$ are finite).
Indeed, assume that $P$ is a stable LTI system of order $n$ and
each $\hat{M}_i$ is a stable LTI system of order $m_i \leq n$.
In this case, $\hat{\mathcal{R}}_i = \mathcal{R} = \mathcal{U} = \mathcal{Z}$,
$\hat{\mathcal{V}}_i = \mathcal{V} = \mathcal{Y} = \mathcal{W}$,
$\Delta_i$ is a stable LTI system given by $\Delta_i = P - M_i$ and is 
an additive perturbation of $M_i$ as shown in Figure \ref{Fig:LTIApproximation}.
Thus $\hat{P}_i=P$ and $\psi_i$ is simply the identity map.
Intuitively, $\psi$ captures the necessity, in general, to approximate the
{\it performance objective} in addition to the {\it plant} for the class of problems considered in this paper,
unless the original plant $P$ is itself a DFM. 
Moreover, additional input and output channels are needed here (for $w$ and $z$)
as signals cannot simply be added as in the LTI setting.

\subsection{Salient Features of the Proposed Notion of Approximation}

The proposed notion has three distinguishing features 
with important implications in control synthesis.
First, the design objectives are gain conditions (Definition \ref{def:GainStability}), 
and are part of the given of the problem.
Accordingly, both the plant and the performance specifications are approximated.
Second, the approximation error is characterized by the error system $\Delta$,
quantified in terms of a gain. 
Third, the relation between the original plant and its approximations is defined in terms 
of the input/output behaviors of two systems: 
$P$, and the feedback interconnection of $\hat{M}_i$ with the corresponding $\Delta_i$.
Specifically, $(\hat{M}_i,\Delta_i)$ exactly matches the control input/sensor output signal pairs of 
$P$ while satisfying additional constraints on the exogenous input/performance output signal pairs.
Consequently, 
correct-by-design control synthesis reduces in this framework to
the problem of synthesizing a controller for the DFM model so that 
the closed loop system satisfies suitable gain conditions,
a problem that can be posed and solved as a dynamic game \cite{JOUR:TaMeDa2011}. 
Moreover, this immediately yields a corresponding finite state controller for the original plant. 

\subsection{Connections to Existing Notions for Hybrid Systems}

We begin by emphasizing that all three notions of approximation enable certified-by-design controller synthesis.
In other words, if a ``sufficiently close" model is constructed and synthesis is successful,
the resulting controller guarantees that the actual closed loop system satisfies the desired specifications,
thus bypassing the need for expensive testing and verification.

Qualitative models \cite{JOUR:Lunze1994, JOUR:RaiOYo1998, JOUR:MooRai1999, JOUR:MoRaOY2002}
are similar to our proposed notion in that they characterize valid approximations in terms of {\it input/output behaviors},
and they typically address (discrete) {\it output feedback} problems.
However, they fundamentally differ in several respects:
First, in the class of nominal models considered ({\it non-deterministic} finite automata).
Second, the lack of a {\it quantitive} measure of the quality of approximation,
as approximation is simply captured by a set inclusion condition requiring
the input/output behavior of the plant to be a subset of that of its approximation. 
Third, the class of controllers (supervisory controllers) and the control synthesis procedure 
(Ramadge/Wonham framework \cite{JOUR:RamWon1987,JOUR:RamWon1989}),
which generally requires solving a dynamic programming problem for a {\it product} automaton derived from the 
approximate model and the performance specifications.

Approximate simulation/bisimulation abstractions 
\cite{JOUR:Tabuad2008, JOUR:TaAmAP2008, JOUR:GiJuPa2008, BOOK:Tabuad2009}
share one similarity with the proposed notion,
namely that they {\it quantify} the quality of approximation
through a suitably defined metric \cite{JOUR:GirPap2007}.
However, they differ from the proposed notion in two important respects:
First, they are fundamentally state-space notions that seek to relate the state trajectories
of the approximate model and the original plant,
rather than their input/output behavior.
Intuitively, an (approximate) simulation abstraction can (approximately) 
generate every possible output signal of the plant for {\it some} choice of input
{\it generally different from the corresponding input of the original system},
a detail of little consequence to verification problems 
but with ramifications on the problem of control synthesis.
Indeed, control design here is a two step procedure 
consisting of supervisory control synthesis followed by controller refinement, 
yielding a {\it hybrid} controller for the original plant \cite{BOOK:Tabuad2009}.
Second, these methods typically address {\it full state feedback} problems.

\addtolength{\textheight}{0cm}   

\section{Current \& Future Work}
\label{Sec:FutureWork}

Current research efforts are focused on developing general algorithms for constructing $\rho / \mu$ approximations. 
Preliminary efforts based on input/output partitions were reported in \cite{CONF:TarDuf2011}.
Future work will be in two additional directions:
First, exploring the use of gain conditions to encode wider classes of performance objectives. 
Specifically, we are interested in understanding to what extent temporal logic specifications, 
demonstrated to some extent in the context of the two existing notions,
can be handled by the proposed framework.
Second, quantifying the complexity
of finite memory approximations needed for a given synthesis task.
At the core of the difficulty is the state observation problem 
and the limitations imposed by the discrete output feedback.
Developments in these two directions will be instrumental in assessing the merits and drawbacks
of the proposed notion relative to the existing ones.

\section{Acknowledgments}

The author is indebted to A. Megretski for many stimulating discussions.
The author thanks M. A. Dahleh for feedback on early versions of some of the ideas presented here.
The author thanks the three anonymous reviewers and the associate editor for their helpful feedback.
This research was supported by NSF CAREER award ECCS 0954601
and AFOSR YIP award FA9550-11-1-0118.


\bibliographystyle{IEEEtranS}
\bibliography{References}

\end{document}